\documentclass[11pt]{article}
\headsep 0pt \topmargin 0pt \oddsidemargin 0pt \evensidemargin 0pt
\textheight 21.5 true cm \textwidth 15 true cm
\parindent 20pt
\parskip 1pt
\usepackage{latexsym,amsfonts,euscript,amsmath}
\usepackage{amssymb}
\usepackage{amsthm}
\usepackage{stmaryrd}
\usepackage{mathrsfs}
\usepackage{mathabx}
\usepackage{leftidx}
\usepackage{indentfirst}
\usepackage[section]{placeins}

\usepackage[colorlinks,
linkcolor=blue,
anchorcolor=blue,
citecolor=blue]{hyperref}

\makeatletter 
\@addtoreset{equation}{section}
\makeatother  

\def\la{\langle}\def\ra{\rangle}

\newcommand{\GL}{{\operatorname{GL}}}

\newcommand{\PSL}{{\operatorname{PSL}}}
\newcommand{\PSU}{{\operatorname{PSU}}}

\newcommand{\soc}[1]{\mathrm{Soc}(#1)}

\newcommand{\cp}{\mathfrak{C}(p)}
\newcommand{\ncp}{\mathfrak{C}^{\sharp}(p)}
\newcommand{\cpm}[1]{\mathfrak{C}(p^{#1})}
\newcommand{\ncpm}[1]{\mathfrak{C}^{\sharp}(p^{#1})}

\def\irr#1{{\rm Irr}(#1)}
\def\cent#1#2{{\bf C}_{#1}(#2)}
\def\syl#1#2{{\rm Syl}_#1(#2)}
\def\norm#1#2{{\bf N}_{#1}(#2)}
\def\oh#1#2{{\bf O}_{#1}(#2)}
\def\ohphi#1{{\bf O}_{p',\Phi}(#1)}

\def\fitt#1{{\bf F}(#1)}
\def\gfitt#1{{\bf F}^*(#1)}
\def\E#1{{\bf E}(#1)}
\def\z#1{{\bf Z}(#1)}

\def\o#1{\overline{#1}}

\newtheorem{lem}{ \bf Lemma}[section]

\newtheorem{prop}[lem]{\bf Proposition}
\newtheorem{thm}[lem]{\bf Theorem}
\newtheorem*{thm*}{\bf Theorem}
\newtheorem*{thmA}{\bf Theorem A}
\newtheorem*{thmB}{\bf Theorem B}
\newtheorem*{corC}{\bf Corollary C}

\newtheorem{cor}[lem]{\bf Corollary}

\title{On finite groups with certain complemented $p$-subgroups
\thanks{{\bf Acknowledgement:} The author gratefully acknowledge the support of China Scholarship Council (CSC).}
}

\author{Yu Zeng\\
{\footnotesize\small  Dept. Mathematics, Changshu Institute of
Technology, Changshu, Jiangsu, 215500, China}\\
{\footnotesize\small E-mail: yuzeng2004@163.com}}

\begin{document}
\maketitle
\date{}

\vskip 1cm

\begin{center}\textbf{Abstract}\end{center}


Given a prime power $p^d$ with $p$ a prime and $d$ a positive integer,
we classify the finite
groups $G$ with $p^{2d}$ dividing $|G|$ in which all subgroups of order $p^d$
are complemented and 
the finite groups $G$ having a normal elementary abelian Sylow $p$-subgroup $P$ such that $p^d<|P|$ in which all subgroups of order $p^d$ are complemented.

\vskip 5cm

\bigskip

\textbf{Keywords}\,\, finite group; complemented subgroup.

\textbf{2020 AMS Subject Classification}: 20D10
\pagebreak

\section{Introduction}

A \emph{complement} of a subgroup $H$ in a group $G$ is a subgroup $K$ of $G$ such that
$G=HK$ and $H\cap K=1$ (here $H$ is called a \emph{complemented subgroup} of $G$).
A classical result by P. Hall states that if every Sylow subgroup of a finite group $G$ has 
a complement in $G$, then $G$ is solvable.
In \cite{hall1937}, P. Hall also characterized finite groups $G$ in which every subgroup of $G$ has a complement.
Motivated by Hall's work, a number of authors studied the structure of finite groups $G$ in which certain class of subgroups are complemented.
For instance, Y.M. Gorchakov showed that Hall’s requirement of the
complementability of all subgroups can be reduced to the complementability of all minimal subgroups (see \cite{gorchakov1960}). 
In this paper, we focus our attention on the complementability of certain $p$-subgroups.

Let $p$ always denote a prime, $d$ always denote a positive integer and $G$ always denote a finite group.
We say that a finite group $G$ is a \emph{$\cpm{d}$-group} if each subgroup of $G$ of order $p^d$ is complemented in $G$;
if $G$ is a $\cpm{d}$-group such that $p^d\leq |G|_p$, then we say $G$ is a \emph{nontrivial $\cpm{d}$-group}.
There has been a lot of interest in the problem of characterizing nontrivial $\cpm{d}$-groups.

V.S. Monakhov and V.N. Kniahina 
investigated the $pd$-composition factors (the composition factors
of order divisible by $p$) of nontrivial $\cp$-groups (see \cite{monakhov2015}).
In \cite{qian2015}, G. Qian and F. Tang described $p$-solvable nontrivial $\cpm{d}$-groups $G$ for a given prime power $p^d\leq \sqrt{|G|_p}$; 
and also characterized nontrivial $\cpm{d}$-groups $G$ which has a normal elementary abelian Sylow $p$-subgroup for a given prime power $p^d< |G|_p$. 
Having classified nontrivial $\cp$-groups and $\cpm{2}$-groups in \cite{zeng2019,zeng2020},
we in this paper 
classify nontrivial $\cpm{d}$-groups $G$ for a fixed prime power $p^d\leq \sqrt{|G|_p}$ and obtain the classification of nontrivial $\cpm{d}$-groups $G$ which has a normal elementary abelian Sylow
$p$-subgroup for a fixed prime power $p^d< |G|_p$.

\begin{thmA}\label{Thm1}
	Let $p^d$ be a power of a prime $p$ such that $p^d>1$ and $G$ a finite group 
	such that $|G|_p\geq p^{2d}$.
	Assume $\oh{p'}{G}=1$.
	Then $G$ is a nontrivial $\cpm{d}$-group if and only if one of following is true.

	{\rm (1)} $G$ is a nontrivial $\cp$-group. 

	{\rm (2)} $G=H\ltimes P$ where $H$ is a cyclic Hall $p'$-subgroup and $P\in\syl{p}{G}$ is a faithful homogeneous $\mathbb{F}_p[H]$-module with all its 
	irreducible submodules having dimension $e$$(>1)$ so that $e\mid (d,\log_p|P|)$.
\end{thmA}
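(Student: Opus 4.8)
I would prove the two implications separately, and in both directions reduce assertions about subgroups of order $p^d$ to assertions about $\FF_p$-subspaces of a Sylow $p$-subgroup. The driving observation is this: if $Q\le P$ has order $p^d$, with $P\in\syl{p}{G}$, and $K$ is a complement of $Q$ in $G$, then by the modular (Dedekind) law $P=P\cap QK=Q(P\cap K)$ with $Q\cap(P\cap K)=1$, so $P\cap K$ complements $Q$ in $P$. Hence \emph{$P$ itself is a $\cpm{d}$-group}, and since $|P|=|G|_p\ge p^{2d}$, the $p$-group case (which I would settle first) forces $P$ to be elementary abelian. This single fact drives everything: forward it says the only subgroups of order $p^d$ to consider are $\FF_p$-subspaces of $P$, and backward it is the first step in pinning down the structure of $G$.

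\textbf{Sufficiency.} For (2), as $P\in\syl{p}{G}$ is normal, every subgroup of order $p^d$ is an $\FF_p$-subspace $Q\le P$ with $\dim_{\FF_p}Q=d$. Since $H$ is a $p'$-group, $P$ is a semisimple $\FF_p[H]$-module, and homogeneity with irreducible constituents of dimension $e$ identifies $P$ with $F^k$, $F=\FF_{p^e}$ and $ek=\log_p|P|$, so that the $H$-submodules of $P$ are exactly the $F$-subspaces. If $R$ is an $F$-subspace with $P=Q\oplus R$, then $K=H\ltimes R$ is a genuine complement of $Q$ in $G$: the orders multiply to $|G|$ and $Q\cap K=Q\cap R=1$. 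Existence of $R$ is where $e\mid d$ enters: writing $d=em$, I look for an $F$-linear surjection $\varphi\colon F^k\to F^m$ whose restriction to $Q$ is injective, whence $R=\ker\varphi$ works; a union bound over the $(p^{d}-1)/(p-1)$ one-dimensional $\FF_p$-subspaces of $Q$ shows the proportion of ``bad'' $\varphi$ is at most $\frac{p^{d}-1}{(p-1)p^{d}}<1$, so a good $\varphi$ exists (the estimate is uniform in $p$, tightest at $p=2$). For (1), I would show a nontrivial $\cp$-group with $|G|_p\ge p^{2d}$ is a nontrivial $\cpm{d}$-group by reducing, via the structure theorem for $\cp$-groups in \cite{zeng2019}, to a type-by-type check that all $p$-subgroups are complemented.

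\textbf{Necessity.} Assume $G$ is a nontrivial $\cpm{d}$-group with $\oh{p'}{G}=1$ that is \emph{not} a $\cp$-group; I must produce the description in (2). By the opening observation $P$ is elementary abelian. The heart of the argument is to show $\E{G}=1$: using the determination of composition factors of order divisible by $p$ (in the spirit of \cite{monakhov2015}) together with the classification of finite simple groups, I would rule out any nonabelian composition factor of order divisible by $p$, the point being that such a factor would force $G$ to be a $\cp$-group, contrary to assumption. Granting $\E{G}=1$ and $\oh{p'}{G}=1$ yields $\gfitt{G}=\oh{p}{G}$, hence $\cent{G}{\oh{p}{G}}\le\oh{p}{G}$; since $P$ is abelian and $\oh{p}{G}\le P$ we get $P\le\cent{G}{\oh{p}{G}}\le\oh{p}{G}$, so $P=\oh{p}{G}$ is a normal Sylow $p$-subgroup and $G=P\rtimes H$ with $H$ a Hall $p'$-subgroup by Schur--Zassenhaus. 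Now $G$ has a normal elementary abelian Sylow $p$-subgroup and $p^d<|G|_p$, so the classification in \cite{qian2015} applies and yields exactly the module-theoretic description in (2); here $e>1$, since $e=1$ would make every constituent one-dimensional and $G$ a $\cp$-group, which we have excluded.

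\textbf{Main obstacle.} The genuinely hard step is proving $\E{G}=1$ in the necessity direction: everything else is module theory or an appeal to the cited classifications, but excluding nonabelian composition factors of order divisible by $p$ requires a case analysis over the simple groups (via \cite{monakhov2015} and the classification), and it is precisely here that the hypotheses $p^d\le\sqrt{|G|_p}$ and ``not a $\cp$-group'' are used. A secondary subtlety is the combinatorial existence of the $F$-subspace complement in the sufficiency of (2), which the union-bound estimate above settles.
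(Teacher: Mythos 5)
Your sufficiency argument for case (2) is correct and takes a genuinely different route from the paper. The Dedekind-law reduction to $P$, the identification of the $H$-submodules of $P$ with the $F$-subspaces of $F^k$ (valid because, $H$ being cyclic, the enveloping algebra of $H$ on each irreducible constituent is a field, cf.\ Lemma \ref{end}), and the union bound $\frac{p^d-1}{(p-1)p^d}<1$ producing $\varphi$ injective on $Q$ are all sound; note that injectivity on $Q$ automatically forces surjectivity of $\varphi$, since otherwise $|Q||\ker\varphi|>|P|$ would contradict $Q\cap\ker\varphi=1$. The paper instead proves this direction inside Theorem \ref{hv} by induction on $|G|$, counting minimal normal subgroups via \cite[Ch.\ B, Prop.\ 8.2]{doerk1992}; your construction is more direct and buys a one-step proof for arbitrary $d$ with $e\mid d$. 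For case (1), though, your proposed type-by-type verification through the classification in \cite{zeng2019} is both unproven and unnecessary: part (5) of Lemma \ref{element31} gives $G\in\cpm{m}$ for every $m$ once $G\in\cp$, in one line.

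The necessity direction, however, has a genuine gap — it outsources precisely the two hard steps of the paper. First, $\E{G}=1$ cannot be obtained by a composition-factor case analysis. The dangerous configuration is a socle $D\times E$ with $D$ elementary abelian of order $p^d$ and $E$ a nonabelian minimal normal subgroup with $|E|_p=p^d$: every composition factor occurring here passes any list-based test (for $d=1$ this configuration genuinely occurs, e.g.\ $A_7\times C_7\in\ncp$ for $p=7$), and for $d\ge 2$ the paper excludes it only by a structural induction (Lemma \ref{equalppart}, fed by the equal-$p$-parts claim in Proposition \ref{F*=Op'}) in which a mixed subgroup $XY$, with $1<X\le D$, $1<Y\le E$ and $|XY|=p^d$, is complemented and the complement $M$ is shown to force $E$ abelian. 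This uses the complementation hypothesis across the whole socle; neither \cite{monakhov2015} (which concerns $d=1$) nor CFSG-based lists of simple $\cpm{d}$-sections (Proposition \ref{simpleCpd}) can see it, and your intended implication ``a nonabelian composition factor of order divisible by $p$ forces $G\in\ncp$'' is exactly the dichotomy to be proved, not an available input. Second, and fatally, your closing citation is wrong: the characterization in \cite{qian2015} of $\ncpm{d}$-groups with a normal elementary abelian Sylow $p$-subgroup (Proposition E there) does \emph{not} yield that $H$ is cyclic — it, and even the later \cite[Theorem A$'$]{qian2020}, give only that $H$ is supersolvable with abelian Sylow subgroups and cyclic Fitting subgroup. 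Cyclicity of $H$ is the main new content of this paper (Theorem B), proved by a minimal-counterexample argument in which all proper subgroups of $H$ are abelian, Clifford theory is applied to an abelian normal subgroup of prime index, endomorphism fields are computed via Lemma \ref{end}, and finally an explicit subgroup $X=Z\times\langle f(w_0)\rangle$ of order $p^{a+1}\le p^e$ admitting no complement is constructed. None of this appears in your proposal, so the defining feature of alternative (2) — that $H$ is cyclic — is not established.
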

Observe that $G$ is a $\cpm{d}$-group if and only if
$G/\oh{p'}{G}$ is a $\cpm{d}$-group (see part (3) of Lemma \ref{element31}). 
Hence, fixed a prime power $p^d$,
Theorem A and \cite[Theorem 1.1]{zeng2019} give a complete
classification of finite $\cpm{d}$-groups $G$ such that $p^d\leq \sqrt{|G|_p}$.

Next theorem is a key to prove Theorem A.

\begin{thmB}
	Assume that a finite $p'$-group $H$ acts faithfully on a finite elementary abelian $p$-group $V$.
	Let $G=H\ltimes V$ where $|V|=p^n$.
	Then $G$ is a nontrivial $\cpm{d}$-group for a fixed positive integer $d<n$ if and only if 
	one of the following statements holds.

	{\rm (1)} $G$ is supersolvable.

	{\rm (2)} $H$ is cyclic and
	$V$ is a homogeneous $\mathbb{F}_p[H]$-module with all its irreducible $\mathbb{F}_p[H]$-submodules having dimension $e$$(>1)$ such that $e\mid (d,n)$.
\end{thmB}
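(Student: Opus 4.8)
The plan is to translate the group condition into a question about $\FF_p[H]$-modules and then analyse that module. Since $H$ is a $p'$-group and $V=\oh{p}{G}$ is the unique Sylow $p$-subgroup of $G$, every subgroup of order $p^{d}$ is a $d$-dimensional $\FF_p$-subspace of $V$, and $V$ is a faithful semisimple $\FF_p[H]$-module by Maschke. I would first record the reduction: a subspace $U\le V$ is complemented in $G$ if and only if $U$ has an $\FF_p[H]$-submodule complement, i.e. a submodule $W$ with $V=U\oplus W$. Indeed, if $K$ complements $U$ then $K\cap V$ is a linear complement of $U$, and by Schur--Zassenhaus together with the conjugacy of Hall $p'$-subgroups in the solvable group $G$ the Hall $p'$-subgroup of $K$ is a conjugate $H^{g}$; writing $g=hv$ with $v\in V$ and using that $V$ is abelian one checks that $H^{v}$ acts on $V$ exactly as $H$ does, so $K\cap V$ is in fact $H$-invariant. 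The converse is clear from $K=H\ltimes W$. Hence $G$ is a nontrivial $\cpm{d}$-group if and only if every $d$-dimensional subspace of $V$ admits a submodule complement, a purely module-theoretic statement that I will abbreviate as property $(\ast)$.

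For the sufficiency of (1) I would note that, under faithfulness, $G$ is supersolvable exactly when $V$ is a direct sum of $1$-dimensional submodules: supersolvability forces all $p$-chief factors to have order $p$, hence $V$ has $1$-dimensional constituents, while conversely a sum of $1$-dimensional modules embeds $H$ into a product of copies of $\FF_p^{\times}$ and so makes $H$ abelian. Grouping the summands by their scalar character gives $V=\bigoplus_{\chi}V_{\chi}$, and the submodules are precisely the block subspaces $\bigoplus_{\chi}W_{\chi}$ with $W_{\chi}\le V_{\chi}$. Property $(\ast)$ then follows from the Steinitz exchange lemma: extend a basis of $U$ to a basis of $V$ using only basis vectors of the individual $V_{\chi}$, and take $W_{\chi}$ to be the span of those chosen inside $V_{\chi}$. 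For the sufficiency of (2), $H$ cyclic means $S$ is one-dimensional over $D:=\mathrm{End}_{H}(S)=\FF_{p^{e}}$, so $V$ becomes an $\FF_{p^{e}}$-vector space whose submodules are exactly its $\FF_{p^{e}}$-subspaces. Since $e\mid d$, I would produce the complement by a counting argument: among the $\FF_{p^{e}}$-linear maps $V\to\FF_{p^{e}}^{\,d/e}$, for each of the $p^{d}-1$ nonzero $u\in U$ only a fraction $p^{-d}$ annihilate $u$; as $(p^{d}-1)p^{-d}<1$, some map is injective on $U$, whence surjective, and its kernel is the desired $\FF_{p^{e}}$-submodule complement.

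The substance is the necessity direction, which I would run through the isotypic decomposition $V=\bigoplus_i V_i$, $V_i\cong S_i^{\,m_i}$ with $e_i=\dim S_i$. The main tool is that submodule complements restrict to isotypic components: if $U=\bigoplus_i U_i$ with $U_i\le V_i$ and $W=\bigoplus_i W_i$ is a submodule complement of $U$ in $V$, then projecting along the other (characteristic) components shows $V_i=U_i\oplus W_i$, so $W_i$ is a submodule complement of $U_i$ in $V_i$. Thus a subspace that is un-complementable inside a single component stays so in $V$. Within a homogeneous component all submodules have $\FF_p$-dimension a multiple of $e_i$, which yields two failures of $(\ast)$ at once: if $V$ is homogeneous with $e\nmid d$ there is no submodule of dimension $n-d$, so no $d$-subspace is complemented; and if $V$ is non-homogeneous with some $e_i>1$, one places in $V_i$ a subspace $U_i$ whose dimension is not a multiple of $e_i$ and pads it by an arbitrary subspace of the remaining components up to dimension $d$, so by the restriction principle $U$ has no submodule complement. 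The remaining and hardest case is a homogeneous module with $e>1$, $e\mid d$, but $H$ non-cyclic, equivalently $s:=\dim_{\mathrm{End}_{H}(S)}S\ge 2$. Here the submodules of $V=S\otimes_{D}D^{m}$ (with $D=\mathrm{End}_{H}(S)$) are the spaces $S\otimes_{D}C$, $C\le_{D}D^{m}$, and I would construct a degenerate subspace meeting every candidate complement; the paradigm is $m=2,\ d=e$, where for a $D$-line $S_{0}\le S$ the subspace $U=S_{0}\oplus S_{0}$ meets every graph copy $S_{\lambda}=\{(x,\lambda x):x\in S\}$ nontrivially and so has no submodule complement. After ruling out these three configurations, only the two good families survive, which by the dictionary above are exactly (1) and (2).

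I expect the obstacle to be this last step. The paradigm for $m=2,\ d=e$ must be upgraded to a subspace of the \emph{exact} dimension $d$ (an arbitrary admissible multiple of $e$) that meets every submodule of dimension $n-d$ when $s\ge2$, and the non-homogeneous case needs careful dimension bookkeeping to fit an obstructing subspace of one component together with subspaces of the others so that the total dimension is $d$ while respecting $d<n$. The genuinely new content is the $\dim_{\mathrm{End}_{H}(S)}S\ge2$ obstruction, since every other failure reduces to the elementary fact that submodule dimensions in a homogeneous component are multiples of $e$.
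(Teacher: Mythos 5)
Your reduction to pure module theory is correct and is a genuinely different route from the paper: the observation that a complement $K$ of $U\leq V$ meets $V$ in an $H$-invariant subspace (because any Hall $p'$-subgroup of $K$ is some $H^{hv}=H^{v}$, and $H^{v}$ acts on the abelian group $V$ exactly as $H$) converts the whole problem into the question of which $d$-dimensional subspaces admit submodule complements, bypassing the paper's group-theoretic induction through Proposition \ref{F*=Op'}, the minimal-nonabelian analysis of $H$, and the Clifford-theoretic case division. (One small repair: $G$ need not be solvable, since $H$ is an arbitrary $p'$-group; the conjugacy of complements you need comes from the Schur--Zassenhaus conjugacy statement, valid here because the normal subgroup $V$ is abelian.) Your two sufficiency arguments are complete and simpler than the paper's (the paper proves $e=1$ sufficiency via an embedding $G\lesssim\bigtimes_i G/N_i$ and $e>1$ sufficiency by induction with a counting of minimal normal subgroups, then invokes part (5) of Lemma \ref{element31} to pass from $e$ to $d$; your Steinitz-exchange and union-bound constructions work directly at level $d$), and the two easy necessity configurations (homogeneous with $e\nmid d$; non-homogeneous with some $e_i>1$) are correctly disposed of by the restriction-to-isotypic-components principle.

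The genuine gap is exactly where you flag it: the case $V\cong S^{m}$ homogeneous, $e\mid d$, $H$ non-cyclic (so $s=\dim_{D}S\geq 2$, $|D|=p^{a}$, $as=e$). Your paradigm $U=S_{0}\oplus S_{0}$ handles only $m=2$, $d=e$, and the suggested remedy of padding it "up to dimension $d$" fails for $m\geq 3$: a subspace supported on two coordinates meets the minimal submodule $S\otimes_{D}C$ trivially whenever the $D$-line $C\leq D^{m}$ involves a third coordinate, so an arbitrarily padded $U$ may very well have a submodule complement --- meeting every relevant submodule is precisely what must be arranged and what the paradigm loses. The step can be completed inside your framework: writing $c=d/e\leq m-1$ (so $c+1\leq m$), take $U_{0}=Z\otimes_{D}C_{0}$ with $Z$ a $D$-line of $S$ and $C_{0}\leq D^{m}$ of $D$-dimension $c+1$; then $\dim_{\mathbb{F}_p}U_{0}=a(c+1)\leq cas=d$ precisely because $s\geq 2$, and every submodule of $\mathbb{F}_p$-dimension $n-d$ has the form $S\otimes_{D}C$ with $\dim_{D}C=m-c$, whence $C\cap C_{0}\neq 0$ and $U_{0}\cap(S\otimes_D C)\supseteq Z\otimes(C_{0}\cap C)\neq 0$; now any $U\supseteq U_{0}$ of dimension exactly $d$ is an uncomplemented subgroup of order $p^{d}$, contradicting $G\in\ncpm{d}$. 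The paper instead first descends from $\ncpm{d}$ to $\ncpm{e}$ (via \cite[Corollary 2.10, Lemma 2.9(4)]{qian2020}), reduces to two homogeneous components by subgroup heredity and a minimal counterexample, and then builds the $(a+1)$-dimensional configuration $X=Z\times\langle f(w_{0})\rangle$ meeting every minimal normal subgroup. Either completion works, but as submitted your proposal leaves the decisive configuration unproved, so it is not yet a proof.
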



If $G$ is a finite group with $\oh{p'}{G}=1$ in which $P\in\syl{p}{G}$ is normal,
then by \cite[Lemma 2.4]{qian2020} $\Phi(P)=P\cap \Phi(G)=\Phi(G)$.
So by part (3) of Lemma \ref{element31} the next corollary, which is an improvement of \cite[Proposition E]{qian2015}, follows immediately.

\begin{corC}
	Assume that a finite $p'$-group $H$ acts faithfully on a finite $p$-group $P$.
	Let $G=H\ltimes P$ where $|P|=p^n$ and $\o G=G/\Phi(G)$ where $|\o P|=p^{n-s}$.
	If $G$ is a nontrivial $\cpm{d}$-group for a fixed positive integer $d<n$,
	then one of the following statements holds.

	{\rm (1)} $\o G$ is supersolvable.

	{\rm (2)} $H$ is cyclic and
	$\o P$ is a homogeneous $\mathbb{F}_p[H]$-module with all its irreducible $\mathbb{F}_p[H]$-submodules having dimension $e$$(>1)$ such that $e\mid (d-s,n-s)$.
\end{corC}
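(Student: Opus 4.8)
The plan is to deduce Corollary C from Theorem B by passing to the Frattini quotient $\o G=G/\frat{G}$, which under our hypotheses is again a semidirect product of a faithful $p'$-group with an \emph{elementary abelian} $p$-group, and then reading off the two cases of Theorem B.

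First I would check that the hypotheses of the displayed fact preceding the corollary are met, i.e. that $\oh{p'}{G}=1$. Since $P\nor G$ is the normal Sylow $p$-subgroup, $\oh{p'}{G}$ is a normal $p'$-subgroup with $\oh{p'}{G}\cap P=1$ and $[\oh{p'}{G},P]\le \oh{p'}{G}\cap P=1$, so $\oh{p'}{G}\le\cent{G}{P}$. As $H$ is a Hall $p'$-subgroup of $G$ and $\oh{p'}{G}H$ is a $p'$-subgroup containing $H$, we get $\oh{p'}{G}\le H$, whence $\oh{p'}{G}\le\cent{H}{P}=1$ by faithfulness. Therefore $\frat{G}=\frat{P}$ by \cite[Lemma 2.4]{qian2020}, so $|\frat{G}|=p^{s}$ and $\o P=P/\frat{P}$ is elementary abelian of order $p^{n-s}$. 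A standard coprime-action argument (a $p'$-group stabilizing the chain $P\ge\frat{P}\ge 1$ centralizes $P$) shows $\cent{H}{\o P}=\cent{H}{P}=1$, and since $H\cap\frat{G}\le H\cap P=1$ we have $\o H\cong H$; thus $\o G=\o H\ltimes \o P$ with $\o H$ a $p'$-group acting faithfully on the elementary abelian group $\o P$.

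The heart of the argument is to show that $\o G$ is a nontrivial $\cpm{d-s}$-group with $1\le d-s<n-s$. Set $N=\frat{G}=\frat{P}$. First, $d>s$: otherwise $N$ would be a nontrivial $p$-group of order $\ge p^{d}$, hence would contain a subgroup $W$ with $|W|=p^{d}\ge p$; as $W\le\frat{G}$ consists of nongenerators, any factorisation $G=WK$ forces $K=G$ and then $W=W\cap K=1$, contradicting that $W$ is complemented. Hence $1\le d-s<n-s$, the right inequality coming from $d<n$. For the complementation, let $\o U\le \o G$ with $|\o U|=p^{d-s}$; its preimage $U$ is a $p$-subgroup of order $p^{d}$ with $N\le U\le P$. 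Since $G$ is a $\cpm{d}$-group, $U$ has a complement $K$, and a direct check using $N\le U$ and $U\cap K=1$ gives that $\o K=KN/N$ satisfies $\o G=\o U\,\o K$ and $\o U\cap\o K=1$. Thus every subgroup of $\o G$ of order $p^{d-s}$ is complemented, and $|\o G|_p=p^{n-s}\ge p^{d-s}$ gives nontriviality. This transfer is essentially the content of part (3) of Lemma \ref{element31} applied to the normal $p$-subgroup $N\le\frat{G}$.

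Finally I would apply Theorem B to $\o G=\o H\ltimes\o P$ with the parameters $n-s$ and $d-s$ in place of $n$ and $d$. Its conclusion yields either that $\o G$ is supersolvable, which is case (1), or that $\o H$ is cyclic and $\o P$ is a homogeneous $\mathbb{F}_p[\o H]$-module whose irreducible submodules have a common dimension $e(>1)$ with $e\mid(d-s,n-s)$; transporting this across the isomorphism $\o H\cong H$ gives case (2). The main obstacle is the key step above, namely verifying that the $\cpm{d}$-property descends to the $\cpm{d-s}$-property on $\o G$ and pinning down the inequality $d>s$; everything else is either bookkeeping or an appeal to \cite[Lemma 2.4]{qian2020}, Lemma \ref{element31} and Theorem B.
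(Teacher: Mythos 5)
Your proof is correct and takes essentially the same route as the paper, which obtains the corollary immediately from the identification $\Phi(G)=P\cap\Phi(G)=\Phi(P)$ (via \cite[Lemma 2.4]{qian2020}), the quotient transfer in part (3) of Lemma \ref{element31}, and Theorem B applied to $\o G=\o H\ltimes\o P$. The extra details you supply --- that $\oh{p'}{G}=1$ follows from faithfulness, that $d>s$ because subgroups of $\Phi(G)$ are never complemented, and that $\o H\cong H$ still acts faithfully on $\o P$ by coprimality --- are precisely the bookkeeping the paper leaves implicit.
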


The paper is organized as follows: in Section 2 we collect useful results 
and also give a detailed description of simple nontrivial $\cpm{d}$-groups;
at the end of Section 3 we prove Theorems A and B; in Section 4 we improve \cite[Theorem A$'$]{qian2020} by applying Theorem B.

Our notation is standard, for group theory it follows \cite{huppert} and for character theory of finite groups it follows \cite{isaacs1976}.
All groups considered in the paper are finite.

\section{Preliminaries}

We begin with a couple of lemmas for later use.

\begin{lem}[\cite{qian2004}]\label{qian2004}
	Let $G$ be an almost simple group with socle $S$. 
	If $p$ divides both $|S|$ and $|G:S|$,
	then $G$ has non-abelian Sylow $p$-subgroups.
\end{lem}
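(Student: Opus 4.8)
The plan is to reduce the statement to a fact about outer automorphisms and then settle that fact through the classification of finite simple groups.

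First I would fix the almost-simple setup: since $\cent G S=1$ for an almost simple group, we may regard $S=\inn S\trianglelefteq G\le\aut S$. Choose $P\in\syl p G$ and set $P_0=P\cap S$, which is a Sylow $p$-subgroup of $S$ because $S\trianglelefteq G$. The hypothesis $p\mid|S|$ gives $P_0\neq1$, and $p\mid|G:S|$ gives $P>P_0$. Arguing by contradiction, suppose $P$ is abelian. Then $P_0$ is abelian, and any $x\in P\setminus P_0$ is a $p$-element of $G$ which centralizes $P_0$ (as $P$ is abelian) and lies outside $S=\inn S$. Thus it suffices to prove the following consequence of the classification: if a nonabelian finite simple group $S$ has an abelian Sylow $p$-subgroup $P_0$, then $\cent{\aut S}{P_0}\,S/S$ is a $p'$-group; equivalently, no $p$-element of $\aut S$ lying outside $\inn S$ centralizes $P_0$.

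Next I would run through the families. If $S$ already has nonabelian Sylow $p$-subgroups there is nothing to prove, so in every case I only treat $S$ with abelian Sylow $p$-subgroups. For the alternating and sporadic groups $\out S$ is a $\{2,3\}$-group of order at most $4$, so only $p\in\{2,3\}$ can occur; the finitely many pairs $(S,p)$ with abelian Sylow $p$-subgroup and $p\mid|\out S|$ are checked individually (typically the Sylow $p$-subgroup of $G$ is visibly nonabelian). For groups of Lie type in defining characteristic $p$, an abelian Sylow $p$-subgroup forces $S\cong\PSL_2(p^a)$, since the unipotent Sylow subgroups of the Suzuki and Ree groups and of all higher-rank groups are nonabelian; here $P_0$ is the elementary abelian unipotent subgroup $\cong\FF_p^{\,a}$, the only outer $p$-elements are field automorphisms, and a field automorphism acts nontrivially on $P_0$, so it cannot centralize it.

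The crux is the cross-characteristic case: $S$ of Lie type over $\FF_q$ with $q=\ell^a$ and $\ell\neq p$. The relevant outer $p$-elements are field automorphisms, together with diagonal automorphisms and, when $p=3$ and $S$ is of type $D_4$, triality. In the abelian-Sylow situation the diagonal and graph contributions are handled case by case, the typical phenomenon being that adjoining them turns the abelian $P_0$ into a nonabelian group (for instance, over $\PSL_2(q)$ with $q\equiv3,5\pmod 8$ the Klein-four Sylow $2$-subgroup becomes the dihedral Sylow $2$-subgroup of $\mathrm{PGL}_2(q)$). For a field automorphism $\phi$ of order $p$ one has $q=q_0^{\,p}$, and $\cent S\phi$ is, up to $p'$-index, the subfield subgroup $S(q_0)$; since every cyclotomic factor $q^{d}-1$ entering $|S|$ satisfies $v_p(q^{d}-1)=v_p(q_0^{\,d}-1)+1$ by the lifting-the-exponent lemma (while the unipotent part contributes no $p$), we get $|\cent S\phi|_p<|S|_p=|P_0|$. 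If $\phi$ centralized $P_0$ then $P_0\le\cent S\phi$, forcing $|P_0|\le|\cent S\phi|_p<|P_0|$, a contradiction.

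I expect the main obstacle to be the organization of this last case. One must verify uniformly across the Lie types exactly which simple groups possess an abelian Sylow $p$-subgroup — this is governed by the order $e=\mathrm{ord}_p(q)$ and by $p$ dividing the governing cyclotomic factor only to the first power, so that the Weyl group does not produce a nonabelian Sylow subgroup — and then, in each such group, decompose an arbitrary outer $p$-element of $\cent{\aut S}{P_0}$ into its diagonal, field, and graph parts and rule each out. The lifting-the-exponent computation is what drives the field-automorphism case, while the remaining bookkeeping for the diagonal and graph automorphisms is the part demanding the most care.
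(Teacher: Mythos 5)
The paper never proves this lemma---it is imported verbatim from Qian--Shi \cite{qian2004}---and your reconstruction follows essentially the same route as that cited source: assuming $P\in\mathrm{Syl}_p(G)$ abelian, produce a $p$-element of $\mathrm{Aut}(S)\setminus\mathrm{Inn}(S)$ centralizing a Sylow $p$-subgroup $P_0$ of $S$, then run a CFSG case analysis in which alternating and sporadic groups are dispatched via the smallness of $\mathrm{Out}(S)$ and the decisive Lie-type case is settled by the estimate $|C_S(\varphi)|_p<|S|_p$ for a field automorphism $\varphi$ of order $p$. Your outline is sound, with two points needing the usual care: the reduction of an arbitrary outer $p$-element to a standard field (or diagonal, or graph) automorphism requires the conjugacy of the order-$p$ elements in the relevant coset of $\mathrm{Inndiag}(S)$ (one cannot literally rule out the ``parts'' of a product separately), and the lifting-the-exponent identity $v_p(q^{d}-1)=v_p(q_0^{d}-1)+1$ holds only for those cyclotomic factors with $\mathrm{ord}_p(q)\mid d$ (both valuations vanish otherwise) and must be replaced for $p=2$ by $v_2(q_0^{2d}-1)=v_2(q_0^{d}-1)+v_2(q_0^{d}+1)$, which still yields the needed strict inequality.
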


\begin{lem}\label{ea}
	Let $G$ be a group such that $\oh{p'}{G}=1$ and $P\in\syl{p}{G}$. Then the following hold.  

	{\rm (1)} If $P$ is elementary abelian, then $\Phi(G)=1$.

	{\rm (2)} If $P$ is abelian, then $P\leq\gfitt{G}$.  
\end{lem}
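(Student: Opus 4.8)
The plan is to treat the two parts separately, reducing each to a structural feature forced by $\oh{p'}{G}=1$.

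For (1), I would first recall that $\Phi(G)$ is nilpotent, so it splits as the direct product of its Sylow subgroups; its $p'$-part is characteristic in $G$ and hence lies in $\oh{p'}{G}=1$. Thus $\Phi(G)$ is a $p$-group, and being a normal $p$-subgroup it sits inside $\oh{p}{G}\le P$. Since $P$ is elementary abelian, $\Phi(G)$ is an elementary abelian normal $p$-subgroup admitting a vector-space complement $W$ in $P$. Gaschütz's complement theorem (an abelian normal $p$-subgroup has a complement in $G$ if and only if it has one in a Sylow $p$-subgroup) then yields a complement $M$ of $\Phi(G)$ in $G$. Finally I would invoke the nongenerator property of the Frattini subgroup: from $G=\Phi(G)M$ we get $G=M$, whence $\Phi(G)=\Phi(G)\cap M=1$.

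For (2), set $F^*=\gfitt{G}$ and recall the self-centralizing property $\cent{G}{F^*}\le F^*$. Because $\oh{p'}{G}=1$ we have $\fitt{G}=\oh{p}{G}$, so $F^*=\oh{p}{G}\,\E{G}$ (a central product), and, crucially, every component of $\E{G}$ has order divisible by $p$: the product of the $p'$-components is a characteristic $p'$-subgroup of $G$, hence trivial. Writing $R=P\cap F^*\in\syl{p}{F^*}$, it suffices to show $P=R$. I would analyze the conjugation action of the abelian group $P$ on the components $K_1,\dots,K_r$. If some $x\in P$ moved a component $K_i$ (necessarily $p\mid|K_i|$), then a Sylow $p$-subgroup of $\langle P\cap\E{G},x\rangle$ would be a nontrivial wreath-product-like configuration, hence nonabelian, a contradiction; so $P$ stabilizes each $K_i$. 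Likewise, if some $x\in P$ induced a nontrivial outer automorphism of $p$-power order on the simple group $S_i=K_i/\z{K_i}$, then the almost simple group $\langle\inn{S_i},x\rangle$ would have nonabelian Sylow $p$-subgroups by \lemref{qian2004}, again contradicting that $P$ is abelian. Hence $P$ induces inner automorphisms on every component, so for each $x\in P$ there is $e\in\E{G}$ with $xe^{-1}$ centralizing $\E{G}$; as $x$ also centralizes $\oh{p}{G}\le P$, we obtain $xe^{-1}\in\cent{G}{F^*}\le F^*$, and therefore $x\in F^*$, i.e. $P\le F^*$.

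The routine reductions (nilpotency of $\Phi(G)$, the Gaschütz complement argument, and the central-product description of $F^*$) are standard. The main obstacle is the component analysis in (2): one must rule out both nontrivial permutation of components and nontrivial outer action on a single component purely from the abelianness of $P$, and it is exactly here that \lemref{qian2004} does the work, converting a hypothetical outer $p$-element into a nonabelian Sylow $p$-subgroup. I would also need the small fact that an automorphism of a quasisimple group which is inner modulo the centre is genuinely inner, which follows since such groups are perfect.
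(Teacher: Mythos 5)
Your proposal is correct, and it splits naturally into two comparisons. For part (2) you take essentially the same route as the paper: since $\oh{p'}{G}=1$ every component has order divisible by $p$, the abelianness of $P$ makes each $p$-element centralize a Sylow $p$-subgroup of every component, this first pins down the components and then, via Lemma~\ref{qian2004}, excludes a nontrivial outer action of $p$-power order, and $\cent{G}{\gfitt{G}}\leq\gfitt{G}$ finishes. The organizational difference is that the paper fixes a single $p$-element $x\notin\gfitt{G}$, passes to $G/\Phi(\E{G})$ so that $\overline{\E{G}}$ becomes a direct product of simple groups, and uses the three subgroups lemma to locate a component on which $\overline{x}$ acts nontrivially; you instead stay inside $G$, show that \emph{every} $x\in P$ induces inner automorphisms on all components, and lift these to genuinely inner automorphisms of the quasisimple components via perfectness. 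This trades the quotient-plus-three-subgroups argument for the (correctly flagged) inner-modulo-centre lemma, and it yields $P\leq\gfitt{G}$ directly rather than by contradiction. One step of yours deserves an extra line in a final write-up, namely the ``wreath-product-like'' claim: the clean version is that $x$ centralizes $P\cap K_i\in\syl{p}{K_i}$, which is nontrivial, so if $K_i^x=K_j\neq K_i$ then $P\cap K_i=(P\cap K_i)^x\leq K_i\cap K_j\leq \z{K_i}$; a central Sylow $p$-subgroup of the perfect group $K_i$ gives a normal $p$-complement by Burnside and hence $p\nmid |K_i|$, a contradiction.

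For part (1), by contrast, your argument is genuinely different from, and shorter than, the paper's. You note that $\Phi(G)$ is nilpotent with trivial $p'$-part, hence a normal abelian $p$-subgroup contained in the elementary abelian $P$, where it certainly has a complement; Gasch\"utz's splitting theorem (\cite[I, Satz 17.4]{huppert}) then produces a complement $M$ of $\Phi(G)$ in $G$, and the nongenerator property forces $G=M$ and $\Phi(G)=1$. The paper instead runs a minimal-counterexample argument that reduces to a quasisimple group with $\z{G}=\Phi(G)\cong C_p$ and derives a contradiction from the absence of nonprincipal linear characters on a perfect group. Your route buys brevity and uses only classical tools, and it is a perfectly sound replacement for the paper's proof of (1).
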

\begin{proof}
	(1) Let $G$ be a minimal counterexample.
	Let $N$ be a minimal normal subgroup of $G$ and $K/N=\oh{p'}{G/N}$. 
	By the minimality of $G$, $\Phi(G/K)=1$. 
	Therefore $\Phi(G)\leq N$ as $\oh{p'}{G}=1$, so $\Phi(G)$ is the unique minimal normal subgroup of $G$.
	We now show that $\E G>1$.
	Otherwise, $\gfitt{G}=\fitt{G}=P$.
	So by \cite[Lemma 2.4]{qian2020} $\Phi(G)=\Phi(G)\cap P=\Phi(P)=1$, a contradiction.
	Let $E$ be a component of $G$.
	If $E<G$, then by the minimality of $G$ we have $\Phi(E)=1$.
	Further, by the minimality of $G$ and the uniqueness of minimal normal subgroup of $G$, 
	$G$ is quasisimple with $\z G=\Phi(G)\cong C_p$.
    Let $\lambda\in\irr{\z G}$ be non-principal. 
	By \cite[Theorem 6.26]{isaacs1976}, there exists a linear character $\chi\in\irr{G}$ such that $\chi_{\z{G}}=\lambda$, so $\lambda=1_{\z G}$ as $\chi=1_G$,
    a contradiction.

	(2) Let $G$ be a counterexample.
	Note that $\fitt{G}$ is a $p$-group as $\oh{p'}{G}=1$.
    If $\E G=1$, then $P=\gfitt{G}$ as $P$ being abelian, a contradiction.
    So $\E{G}>1$.
	Observing that $P\nleq \gfitt{G}$ and $P$ is abelian,
	we take a $p$-element $x\in G-\gfitt{G}$ such that $x^p\in \gfitt{G}$.
	Write $\overline{G}=G/\Phi(\E{G})$.
	Then $\la\overline{x}\ra$ acts on $\overline{\E{G}}=S_1\times \cdots \times S_t$,
	where $S_i$ are nonabelian simple groups.
	Also since $\oh{p'}{\overline{G}}=1$, $p\mid |S_i|$ for $1\leq i\leq t$.
    Since there exists a Sylow $p$-subgroup of $S_i$ which is centralized by $\la \overline{x}\ra$, $\overline{x}$ fixes each $S_i$. 
	We now show that there exists $1\leq i_0\leq t$ such that $[\overline{x},S_{i_0}]\neq 1$.
	Otherwise, $[\la x\ra,\E{G}]\leq \Phi(\E{G})$.
	Notice that $\Phi(\E{G})=\z{\E{G}}$, and so $[\la x\ra,\E{G},\E{G}]=[\E{G},\la x\ra,\E{G}]=1$.
    By Three subgroups lemma, $[\E{G},\la x\ra]=[\E{G},\E{G},\la x\ra]=1$,
    observing that $[\la x\ra, \gfitt{G}]=1$ as $P$ being abelian,
    and it follows from $\cent{G}{\gfitt{G}}\leq \gfitt{G}$ that $x\in \gfitt{G}$, a contradiction.
	Therefore $\la\overline{x}\ra S_{i_0}/\cent{\la \overline{x}\ra}{S_{i_0}}$ is an almost simple group with socle $S_{i_0}\cent{\la \overline{x}\ra}{S_{i_0}}/\cent{\la \overline{x}\ra}{S_{i_0}}$ and an abelian Sylow $p$-subgroup, which contradicts Lemma \ref{qian2004} as $\la\overline{x}\ra S_{i_0}/\cent{\la \overline{x}\ra}{S_{i_0}}>S_{i_0}\cent{\la \overline{x}\ra}{S_{i_0}}/\cent{\la \overline{x}\ra}{S_{i_0}}$.
	Thus $P\leq\gfitt{G}$.
\end{proof}

We now collect some useful results about $\cpm{d}$-groups. 
In the following, we denote the class of $\cpm{d}$-groups by $\cpm{d}$ and the class of nontrivial $\cpm{d}$-groups by $\ncpm{d}$.

\begin{prop}\label{simpleCpd}
	Let $S$ be a nonabelian simple group. 
	Then $S\in\ncpm{d}$ if and only if 
	$|S|_p=p^d$ and one of the following holds.
   
	{\rm (i)} $S=A_p$ and $p^d=p\geq 7$.

	{\rm (ii)} $S=\PSL(2,11)$ and $p^d=11$.

	{\rm (iii)} $S=\mathrm{M}_{11}$ and $p^d=11$.

	{\rm (iv)} $S=\mathrm{M}_{23}$ and $p^d=23$.

	{\rm (v)}  $S=\PSL(n,q)$ where $p^d=p=\frac{q^n-1}{q-1}$ and $n$, $p$ are distinct primes.

	{\rm (vi)} $S=\PSL(n,q)$ where $p^d=\frac{q^n-1}{q-1}>p>2$ and $n$, $p$ are different primes.
	In particular, Sylow $p$-subgroup of $S$ is isomorphic to $C_{p^d}$.

	{\rm (vii)} $S=\PSL(2,q)$ where $p^d=2^d=q+1\geq 8$ and $q$ is a Mersenne prime. In particular, 
	Sylow $2$-subgroup of $S$ is isomorphic to $D_{2^d}$.	
\end{prop}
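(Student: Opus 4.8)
The plan is to translate membership in $\ncpm{d}$ into the existence of a subgroup of $p$-power index and then run through the known classification of such subgroups in nonabelian simple groups.

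\emph{Reduction.} Suppose first that $S\in\ncpm{d}$. Since $p^d\le |S|_p$, there is a subgroup $H\le S$ with $|H|=p^d$, and by hypothesis $H$ has a complement $K$, so $S=HK$ and $H\cap K=1$; hence $|S:K|=p^d$. As $S$ is simple and $K$ is proper, $K$ is core-free, so $S$ has a core-free subgroup of $p$-power index $p^d$. By the classification of subgroups of prime-power index in nonabelian simple groups, $S$ is one of: $A_{p^d}$ (with $K=A_{p^d-1}$); $\PSL(n,q)$ with $K$ a maximal parabolic of index $(q^n-1)/(q-1)=p^d$; $\PSL(2,11)$ (index $11$); $\mathrm{M}_{11}$ (index $11$); $\mathrm{M}_{23}$ (index $23$); or $\PSU(4,2)$ (index $27$). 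A remark that simplifies both directions: once we know $|S|_p=p^d$, the subgroups of order $p^d$ are exactly the members of $\syl{p}{S}$, any subgroup of index $p^d$ is a Hall $p'$-subgroup $K$, and for every $P\in\syl{p}{S}$ we have $P\cap K=1$ (coprime orders) and $|PK|=|S|$, so $K$ complements every member of $\syl{p}{S}$. Thus, granted $|S|_p=p^d$, the condition $S\in\ncpm{d}$ is \emph{equivalent} to the existence of a subgroup of index $p^d$. The real work in the forward direction is therefore to show $|S|_p=p^d$, and the converse reduces to exhibiting one subgroup of index $p^d$.

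\emph{Eliminating $|S|_p>p^d$.} For $S=A_{p^d}$ one has $|S|_p=p^{(p^d-1)/(p-1)}$, which equals $p^d$ only when $d=1$; and $|\PSU(4,2)|_3=3^4>27$. In these cases I must exhibit a subgroup of order $p^d$ with no complement. For $A_{p^d}$ this is clean: a complement of a subgroup $H$ of order $p^d$ has order $|A_{p^d-1}|$ and index $p^d$, hence is a point stabilizer, so $H$ is complemented precisely when it acts regularly on the $p^d$ points. But for $d\ge2$ one has $|A_{p^d-1}|_p\ge p^d$, so a point stabilizer already contains a subgroup $H$ of order $p^d$; such an $H$ has a global fixed point and so cannot act regularly, whence it is not complemented. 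This removes $A_{p^d}$ for $d\ge2$ and leaves $A_p$; since $|A_{p-1}|_p=1$, every subgroup of order $p$ is generated by a $p$-cycle, acts regularly, and is complemented, giving case (i) (we take $p\ge7$, as $A_5\cong\PSL(2,4)$ reappears under case (v)). For $\PSU(4,2)$ a direct inspection of its $3$-local subgroup structure produces an order-$27$ subgroup without a complement, so it is excluded. The groups $\PSL(2,11)$, $\mathrm{M}_{11}$, $\mathrm{M}_{23}$ satisfy $|S|_p=p^d$ with the stated index subgroup ($A_5$, $\mathrm{M}_{10}$, $\mathrm{M}_{22}$ respectively), yielding (ii)--(iv).

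\emph{The $\PSL(n,q)$ family.} This is where the main effort lies. Here $(q^n-1)/(q-1)=p^d$ with $p\nmid q$. I would first invoke the number-theoretic classification of prime-power values of $(q^n-1)/(q-1)=1+q+\cdots+q^{n-1}$ (via Zsigmondy's theorem together with the known solutions of that equation) to force $n$ to be prime and $n\neq p$. Writing $e=\mathrm{ord}_p(q)$, the case $e=1$ forces $p\mid n$ and is excluded, so $e=n$; then among $q^i-1$ for $1\le i\le n$ only $i=n$ is divisible by $p$, whence $|\GL(n,q)|_p=(q^n-1)_p=p^d$ and, as $p\nmid\gcd(n,q-1)$, also $|S|_p=p^d$ with the Sylow $p$-subgroup lying in a cyclic Singer-type torus, so it is isomorphic to $C_{p^d}$. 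For $p>2$ this gives (v) when $d=1$ and (vi) when $d\ge2$. If $p=2$ then $q+1$ even forces $n=2$ (for $n\ge3$ the number $1+q+\cdots+q^{n-1}$ is either odd or has an odd primitive prime divisor), so $q+1=2^d$, making $q=2^d-1$ a Mersenne prime; the Sylow $2$-subgroup of $\PSL(2,q)$ is then dihedral of order $q+1=2^d$, and the Borel subgroup, of odd order $q(q-1)/2$ and index $q+1=2^d$, is the required Hall $2'$-complement, giving (vii).

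\emph{Converse and obstacle.} For each group in (i)--(vii) one checks $|S|_p=p^d$ and names a subgroup of index $p^d$ --- $A_{p-1}$ in $A_p$, $A_5$ in $\PSL(2,11)$, the point stabilizers $\mathrm{M}_{10}$, $\mathrm{M}_{22}$ in $\mathrm{M}_{11}$, $\mathrm{M}_{23}$, the maximal parabolic in $\PSL(n,q)$, and the Borel in $\PSL(2,q)$ --- and concludes $S\in\ncpm{d}$ by the reduction remark. I expect the third paragraph to be the principal obstacle: pinning down the exact arithmetic on $(n,q,p,d)$ from the prime-power-index hypothesis and confirming the cyclic-versus-dihedral Sylow structure. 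A secondary difficulty is the clean exclusion of $\PSU(4,2)$, which rests on its concrete subgroup lattice rather than on a uniform argument.
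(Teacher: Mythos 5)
Your proposal is correct in outline and follows the same skeleton as the paper's proof: both directions pass through Guralnick's classification of subgroups of prime-power index in simple groups \cite{guralnick1983}, together with the reduction (used implicitly in the paper) that once $|S|_p=p^d$, the subgroups of order $p^d$ are exactly the Sylow $p$-subgroups and any subgroup of index $p^d$ is a Hall $p'$-subgroup complementing all of them. Within that skeleton you make two genuinely different local choices. First, the paper disposes of all $d=1$ cases at the outset by citing \cite[Lemma 2.8]{zeng2019} and only argues $d>1$; you treat $d=1$ uniformly, which is more self-contained. Second, to eliminate $A_{p^d}$ for $d\geq 2$ the paper cites \cite[Proposition F]{qian2015} (a group in $\ncpm{d}$ with $|S|_p\geq p^{2d}$ has elementary abelian Sylow $p$-subgroups, which $A_{p^d}$ does not), whereas your regularity argument --- any complement of an order-$p^d$ subgroup is a point stabilizer, so a complemented subgroup of order $p^d$ acts regularly, yet a point stabilizer already contains a subgroup of order $p^d$ since $|A_{p^d-1}|_p\geq p^d$ --- is more elementary and works; it only needs the standard fact that index-$p^d$ subgroups of $A_{p^d}$ are point stabilizers (valid as $p^d\neq 6$). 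The arithmetic in the $\PSL(n,q)$ family (order of $q$ mod $p$, Singer torus giving $C_{p^d}$, parity forcing $n=2$ when $p=2$) matches the paper's.

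Two soft spots. Your exclusion of $\PSU(4,2)$ with $p^d=3^3$ is only asserted; the paper's actual argument is short and you should record it: by the ATLAS \cite{atlas} all subgroups of index $3^3$ in $S=\PSU(4,2)$ are conjugate, and if $S=DH$ with $D\cap H=1$ and $|D|=3^3$, then writing $x^{-1}=d_0h_0$ with $d_0\in D$, $h_0\in H$ gives $DH^x=Dh_0Hx=DHx=S$, so $D$ would meet \emph{every} subgroup of index $27$ trivially; since $|H|_3=3$, one may instead choose $D$ of order $27$ containing a Sylow $3$-subgroup of $H$, a contradiction. Second, your parenthetical claim that the prime-power condition forces $n\neq p$ is overstated: case (vii) has $n=p=2$. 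Your subsequent case split repairs this and is exactly the paper's, but note the odd-$p$ exclusion of $n=p$ does need the exact-valuation lemma ($q\equiv 1 \pmod p$ with $p>2$ gives $p\,\Vert\,\frac{q^p-1}{q-1}$, \cite[Lemma 10.11]{isaacs1976}), and ruling out $q=2^d-1$ a proper prime power is where the paper invokes Mihailescu's theorem \cite{mihailescu}, a step your sketch passes over silently.
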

\begin{proof}
	By \cite[Lemma 2.8]{zeng2019}, it suffices to prove that $G\in\ncpm{d}$  for $d>1$ if and only if either (vi) or (vii) holds. 
	
	Assume either (vi) or (vii) holds. Then by \cite[Theorem 1]{guralnick1983} 
	Hall $p'$-subgroup of $S$, which has index $p^d$ in $S$, exists.
	That is $S\in\ncpm{d}$ for $d>1$. 
	
    Assume now $S\in\ncpm{d}$ for $d>1$.
	Then $S$ has a subgroup of index $p^d$, and hence, by \cite[Theorem 1]{guralnick1983}, 
	$S\cong A_{p^d}$; or $S\cong \PSU(4,2)$ and $p^d=3^3$; or $S\cong \PSL(n,q)$ with $p^d=\frac{q^n-1}{q-1}$ and $n$ a prime. 
	
	Suppose $S\cong A_{p^d}$ with $d>1$. Then $|S|_p=p^n\geq p^{2d}$,
	and hence $S$ has an elementary abelian Sylow $p$-subgroup by \cite[Proposition F]{qian2015}, 
	a contradiction. 
	Suppose $S\cong \PSU(4,2)$ and $p^d=3^3$. 
	Let $D$ be a subgroup of order $3^3$ in $S$. 
	Then $S=D\cdot H$, where $D\cap H=1$.
	Therefore $S=D\cdot H^x$ so that $D\cap H^x=1$ for all $x\in S$.
	By checking \cite{atlas}, we know that all subgroups of index $3^3$ in $S$ are conjugate in $S$.  
	So we conclude a contradiction as $|H|_3=3$.
	
	Thus $S\cong \PSL(n,q)$ with $p^d=\frac{q^n-1}{q-1}$ and $n$ a prime.
	Then it follows from \cite[Theorem 1]{guralnick1983} that $|S|_p=p^d$.
	Also, by \cite[II, Theorem 7.3]{huppert} $S$ has a cyclic subgroup of order $\frac{q^n-1}{c(q-1)}=\frac{p^d}{(q-1,n)}$ where $c=(q-1, n)$ (this cyclic subgroup is called Singer cycle).
	Suppose $n\neq p$. 
	Then $S$ has a cyclic subgroup of order $p^d$, and hence (vi) follows.
	Suppose $n=p$. 
	Note that $p^d=\frac{q^n-1}{q-1}$,
    and so $q^p\equiv 1(\mathrm{mod}~p)$ and also $q^{p-1}\equiv 1(\mathrm{mod}~p)$.
    Hence $q=q^{(p,p-1)}\equiv 1(\mathrm{mod}~p)$.
    If $p>2$, then it follows by \cite[Lemma 10.11]{isaacs1976} that $p$ is the exact power of $p$ dividing $\frac{q^p-1}{q-1}=p^d$, contradicting to $d>1$.
    So $n=p=2$. 
	Thus $S\cong \PSL(2,q)$ with $q=2^d-1$.
	By \cite[Theorem 5]{mihailescu}, $q=2^d-1$ is a Mersenne prime. 
	In this case, Sylow $2$-subgroups of $S$ are isomorphic to $D_{2^d}$ where $d\geq 3$.
\end{proof}


\begin{lem}\label{element31} 
	For a group $G\in\cpm{d}$, the following hold.

	{\rm (1)} If $H\leq G$, then $H\in\cpm{d}$.

	{\rm (2)} If $N\leq G$ is a direct product of $n$ simple groups for $n\leq d$, then $|N|_p\leq p^d$.
	In particular, if $N$ is minimal normal in $G$, then $|N|_p\leq p^d$.

	{\rm (3)}  If $N$ is a normal subgroup of $G$  with $|N|_p=p^e\leq p^d$, then $G/N\in\cpm{d-e}$.
	Furthermore, if $e=0$, then $G/N\in\cpm{d-e}$ implies that $G\in\cpm{d}$.


	{\rm (4)} If $|G|_p\geq p^d$ and $N$ is subnormal in $G$ with $|N|_p=p^e$,
	then $N\in\cpm{m}$ where $m=\min\{d,e\}$.

	{\rm (5)} $G\in\cpm{md}$ for each nonnegative integer $m$.
\end{lem}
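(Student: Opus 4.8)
The plan is to prove the five assertions in an order that lets the routine parts serve the harder ones: (1) and (5) first, then (3), then (4), and finally (2). Parts (1) and (5) are applications of Dedekind's modular law. For (1), given $D\le H$ with $|D|=p^d$ and a complement $K$ of $D$ in $G$, the modular law gives $H=H\cap DK=D(K\cap H)$ with $D\cap(K\cap H)=1$, so $K\cap H$ complements $D$ in $H$. For (5) I induct on $m$ (the case $m\le 1$ being trivial): inside a subgroup $D$ of order $p^{md}$ I choose $D_1\le D$ of order $p^d$, complement it by $K_1$ in $G$, and note $D=D_1(D\cap K_1)$ forces $|D\cap K_1|=p^{(m-1)d}$; since $K_1\in\cpm{d}$ by (1), the induction hypothesis gives $K_1\in\cpm{(m-1)d}$, so $D\cap K_1$ has a complement $K_2$ in $K_1$, and $G=DK_2$ with $D\cap K_2=1$.

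Parts (3) and (4) both use that a complement of a $p$-subgroup has $p$-power index and therefore contains a full Hall $p'$-subgroup. For (3) I lift $\bar D\le G/N$ of order $p^{d-e}$ to a Sylow $p$-subgroup $D$ of its preimage, so that $|D|=p^d$, $D\cap N\in\syl{p}{N}$ and $DN/N=\bar D$; if $L$ complements $D$ in $G$, then a Hall $p'$-subgroup of $G$ lying in $L$ meets $N$ in a Hall $p'$-subgroup of $N$, which pins $|L\cap N|=|N|_{p'}$ and yields $\bar D\cap(LN/N)=1$, so $LN/N$ complements $\bar D$; the addendum $e=0$ is a direct pull-back of complements along $G\to G/N$. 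For (4), when $e\ge d$ the claim $N\in\cpm{d}$ is immediate from (1), so the content is $e<d$, i.e. $m=e$: writing $Q=P\cap N\in\syl{p}{N}$ for a suitable $P\in\syl{p}{G}$, I enlarge $Q$ to $\tilde D$ with $Q\le\tilde D\le P$ and $|\tilde D|=p^d$ (possible since $|G|_p\ge p^d$), so that automatically $\tilde D\cap N=Q$; complementing $\tilde D$ in $G$ by $L$, the same Hall count gives $|L\cap N|=|N|_{p'}$ while $Q\cap L\cap N\le\tilde D\cap L=1$, so $L\cap N$ complements $Q$ in $N$. The subnormal case then follows by climbing a subnormal chain one step at a time and composing the successive minima.

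The substance is part (2), which I prove by induction on the number $n$ of factors, the base case $n=1$ being Proposition \ref{simpleCpd}. By (1) we have $N\in\cpm{d}$, and deleting the direct factors of order prime to $p$ alters neither $|N|_p$ nor this membership, so assume $p\mid|S_i|$ for every $i$. If some $|S_i|_p=p$, I quotient it out: by (3), $N/S_i\in\cpm{d-1}$ is a product of $n-1\le d-1$ simple groups, and the induction hypothesis bounds $|N/S_i|_p\le p^{d-1}$, whence $|N|_p\le p^d$. So assume each $S_i$ is nonabelian with $|S_i|_p\ge p^2$, and suppose $|N|_p=p^f>p^d$. As $n\le d\le f$, choose a \emph{spread} subgroup $D=D_1\times\cdots\times D_n$ with $1\ne D_i\le P_i\in\syl{p}{S_i}$ and $|D|=p^d$. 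Any complement $K$ of $D$ in $N$ is then core-free, since $S_i\le K$ would force $1\ne D_i\le D\cap K$; thus $N$ acts faithfully and transitively on $p^d$ points, so each $S_i$ carries a subgroup $S_i\cap K$ of $p$-power index exceeding $1$, and I intend to combine \cite[Theorem 1]{guralnick1983} with Proposition \ref{simpleCpd} (which, through (1), constrains every $S_i\in\cpm{d}$) to force the contradiction $f\le d$.

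The step I expect to be hardest is exactly this last one: passing from ``$N$ has a core-free subgroup of index $p^d$'' to the numerical bound $f=\sum_i\log_p|S_i|_p\le d$. Faithfulness alone only gives $f\le\frac{p^{d}-1}{p-1}$, which is far too weak, so the argument must invoke Guralnick's explicit list of simple groups with a subgroup of prime-power index and, more delicately, must control the \emph{subdirect} (diagonal) complements $K$ that can arise across several factors; verifying that each admissible factor is forced to contribute its entire $p$-part to the permutation degree is the technical heart. Finally, the ``in particular'' reduces to the claim that a minimal normal subgroup with $p\mid|N|$ has at most $d$ factors, which is precisely the $n>d$ instance of the same spread construction (placing nontrivial $p$-elements in $d$ of the isomorphic factors), after which the general statement applies.
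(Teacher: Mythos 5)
The decisive gap is in part (2), which is in fact the only part the paper proves itself (for (1), (3), (4), (5) it simply cites \cite[Lemma 3.1]{zeng2020}). Your argument for (2) stops exactly where all the work lies: having built the spread subgroup $D$ and a core-free complement $K$ of $D$ in $N$, you only announce that you ``intend to combine'' \cite[Theorem 1]{guralnick1983} with Proposition \ref{simpleCpd} to force $f\le d$, and you yourself flag the control of subdirect (diagonal) complements as the unresolved technical heart. This is a missing idea, not a routine verification: $p^d=|N:K|$ does not factor as $\prod_i |S_i:S_i\cap K|$, and knowing that each $|S_i:S_i\cap K|$ is a nontrivial $p$-power (hence that each $S_i$ is on Guralnick's list, indeed $S_i\in\ncpm{f_i}$ with $p^{f_i}=|S_i|_p$ by (4)) still permits $|K|>\prod_i|S_i\cap K|$, so no contradiction with $\sum_i f_i>d$ follows without a genuine further argument about how $K$ sits diagonally across the factors. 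The paper's proof of (2) sidesteps all of this: taking a counterexample minimizing $|G|+n+d$ and writing $N=S\times T$ with $S$ simple, it picks $P\le S$ of order $p$, notes $H=P\times T\in\cpm{d}$ by (1) and $T\cong H/P\in\cpm{d-1}$ by (3), so minimality gives $|T|_p\le p^{d-1}$; then with $|T|_p=p^a<p^d$, (3) gives $S\cong N/T\in\cpm{d-a}$ and Proposition \ref{simpleCpd} yields $|S|_p\le p^{d-a}$, whence $|N|_p\le p^d$. No complement inside the direct product is ever analysed, so the diagonal difficulty never arises. The ``in particular'' clause (a minimal normal subgroup is a product of at most $d$ simple groups) is likewise cited from \cite{zeng2020}, whereas your sketch delegates it to the same incomplete spread construction.

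There is also a flawed justification in (3) and (4): you appeal to ``a Hall $p'$-subgroup of $G$ lying in $L$'', but $G$ is an arbitrary finite group, so a Hall $p'$-subgroup need not exist, and even when it does, a complement $L$ (which merely has order $|G|/p^d$) need not contain one. The fact you actually need, $|L\cap N|=|N|_{p'}$, is true for $N\trianglelefteq G$ and follows from counting alone: $|N:N\cap L|=|NL:L|$ divides both $|G:L|=p^d$ and $|N|$, hence divides $p^e$; conversely $G=DL$ gives $|G:NL|=|D:D\cap NL|\le |D:D\cap N|=p^{d-e}$, so $|NL:L|\ge p^e$, forcing equality. With this repair (and your reduction of the subnormal case in (4) to the normal case by climbing the chain and composing minima, which is sound), your treatments of (1), (3), (4) and (5) are correct --- the Dedekind arguments for (1) and (5) are exactly right. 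But as written, the proposal establishes the four parts the paper outsources and leaves unproved the one part, (2), that the paper actually proves.
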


\begin{proof}
	For (1), (3), (4) and (5), we refer to \cite[Lemma 3.1]{zeng2020}. We only prove (2) here.

   By Proposition \ref{simpleCpd}, we may assume  that $n>1$.
   Let $G$ be a counterexample with smallest possible sum $|G|+n+d$.
   Then $|N|_p>p^d$, $G=N$ and each direct factor of $N$ has order divisible by $p$. 
   Write $N=S\times T$ where $S$ is simple and $T$ is a direct product of $n-1$ simple groups,
   we claim that $|T|_p<p^d$.
   Otherwise, $|T|_p\geq p^d$.
   Let $P\leq S$ be such that $|P|=p$ and let $H=P\times T$.
   Then by (1) $H\in\cpm{d}$ and hence by (3) $T\cong H/P\in\cpm{d-1}$.
   Notice that $T\leq G$ is a direct product of $n-1$ simple groups,
   so by the minimality of $G$ we have $|T|_p\leq p^{d-1}$, a contradiction.
   Now $|T|_p=p^a$ for some positive integer $a<d$.
   By (3), $S\cong N/T=G/T\in\cpm{d-a}$, so $|S|_p\leq p^{d-a}$.
   As a consequence, $|N|_p=|S|_p|T|_p\leq p^d$, a contradiction.
   
   In particular, if $N$ is minimal normal in $G$, then by \cite[Lemma 3.1(2)]{zeng2020} $N$ is a direct product of at most $d$ copies of simple groups, and so $|N|_p\leq p^d$.
\end{proof}

\section{Proof of the main theorem}

We denote by $\ohphi{G}$ the subgroup of a group $G$ such that $\ohphi{G}/\oh{p'}{G}=\Phi(G/\oh{p'}{G})$.
Let $\o G=G/\ohphi{G}$. 
Then $\oh{p'}{\o G}=\Phi(\o G)$.

\begin{lem}\label{equalppart}
  Let $G\in\ncpm{d}$ be such that $\oh{p'}{G}=\Phi(G)=1$ and $|G|_p=|\gfitt{G}|_p>p^d$.
  Assume that $|N|_p$ is a constant for each minimal normal subgroup $N$ of $G$.
  Then $\E G=1$.    
\end{lem}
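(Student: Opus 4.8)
The plan is to suppose $\E G\neq 1$ and to manufacture from this a subgroup of order $p^{d}$ that admits no complement, contradicting $G\in\ncpm{d}$. I begin by turning the hypotheses into a concrete description of $\gfitt G$. As $\oh{p'}{G}=1$ the Fitting subgroup $\fitt G=\oh{p}{G}$ is a $p$-group, and since $\Phi(G)=1$ the standard fact $\Phi(\oh{p}{G})\le\Phi(G)$ for the normal subgroup $\oh{p}{G}$ shows that $\fitt G$ is elementary abelian; hence $\gfitt G=\fitt G\,\E G$ is a central product in which $\z{\E G}\le\fitt G$ is a normal elementary abelian $p$-subgroup. Because $\cent G{\gfitt G}\le\gfitt G$, the equality $|G|_p=|\gfitt G|_p$ forces a Sylow $p$-subgroup of $G$ to lie inside $\gfitt G$, so the whole $p$-part of $G$ is distributed between $\fitt G$ and the components. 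In particular $\E G\neq1$ produces nonabelian chief factors, and whenever a component has trivial centre its $G$-orbit product is a genuine nonabelian minimal normal subgroup of $G$.

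Next I would identify these nonabelian factors precisely. Each component $E$ is quasisimple and subnormal in $G$, so part (4) of \lemref{element31} makes $E$ a $\cpm{m}$-group; factoring out the central (hence $p$-) subgroup $\z E$ by part (3) of \lemref{element31} exhibits the simple group $E/\z E$ as a simple $\ncpm{s}$-group for an appropriate $s$. \propref{simpleCpd} then determines these factors completely: in every admissible case $|E/\z E|_p=p^{s}$ is the full $p$-part, and the Sylow $p$-subgroup of $E/\z E$ is cyclic (cases (i)--(vi)), or dihedral of order $2^{s}$ when $p=2$ (case (vii)); in particular $|E|_p\le p^{d}$ for every component. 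Invoking the hypothesis that $|N|_p$ takes one fixed value $p^{c}$ over all minimal normal subgroups $N$, and part (2) of \lemref{element31} to see $c\le d$, I would read off that each minimal normal subgroup contributes exactly $p^{c}$ to $|G|_p$. Once the central $p$-part is disposed of (the delicate point flagged below), the socle contains $\gfitt G$, so $|\soc G|_p=|G|_p>p^{d}\ge p^{c}$ and the socle is a product of at least two minimal normal subgroups of equal $p$-part $p^{c}$, at least one of them nonabelian.

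Finally I would play this multiplicity off against the complementation hypothesis. Imitating the peeling in the proof of part (2) of \lemref{element31}, I would strip one cyclic (or dihedral) Sylow generator at a time off the nonabelian factors by combining parts (1) and (3) of \lemref{element31}, each step lowering by one the $p$-exponent that the surviving socle is allowed to carry; since every minimal normal subgroup obstinately contributes the same $p^{c}\ge p$, after finitely many steps the allowed exponent drops below what $p^{c}$ requires, which is the desired contradiction. I expect the real obstacle to sit exactly here: the subgroups of order $p^{d}$ whose complements are obstructed are the \emph{diagonal} ones straddling several isomorphic simple factors---already the product $\PSL(2,11)\times\PSL(2,11)$ must fail to be a $\cpm{1}$-group even though each factor is one---and excluding their complements relies on the explicit cyclic/dihedral Sylow shape supplied by \propref{simpleCpd} rather than on part (2) of \lemref{element31} alone. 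A second delicate point, to be dispatched first, is to absorb the central $p$-part $\z{\E G}$ arising from components with non-trivial centre, so that the clean ``one nonabelian minimal normal subgroup per $G$-orbit'' picture is actually available.
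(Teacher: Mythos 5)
Your proposal does not reach the paper's proof, and its engine is broken at the decisive point: the claim that diagonal subgroups straddling isomorphic simple factors obstruct complementation is never proved, and your touchstone example is in fact false. Take $S=\PSL(2,11)$ and any subgroup $X=\la (x,y)\ra$ of order $11$ in $S\times S$. If $y\neq 1$, let $A\leq S$ with $A\cong A_5$ of index $11$ and set $M=S\times A$; then $\la y\ra\cap A=1$ (as $11\nmid |A_5|$) forces $X\cap M=1$, and $|X||M|=|S|^2$ gives $G=XM$, so $M$ complements $X$; the case $y=1$ is symmetric. Hence $\PSL(2,11)\times\PSL(2,11)$ \emph{is} a nontrivial $\cp$-group, and the obstruction you hope to exploit simply does not exist when the common $p$-part of the minimal normal subgroups is $p$. (Note that this group even satisfies all the printed hypotheses of the lemma with $d=1$; the lemma is really operating in the regime $e\geq 2$ in which \propref{F*=Op'} invokes it, and any correct argument must use subgroups of order $p^d$ with $d\geq 2$ split over \emph{two} normal factors.) Your ``peeling'' sketch cannot substitute for this: stripping order-$p$ subgroups via parts (1) and (3) of \lemref{element31} only reproduces the counting bound of part (2), which constrains products of at most $d$ simple groups and is perfectly consistent with a socle of $p$-part $p^{te}>p^d$ whose minimal normal factors each carry $p$-part $p^e\leq p^d$ --- no numerical contradiction is available, so the core of the proof is missing, not merely deferred.

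The paper's route is entirely different and, at the crucial step, inverted relative to yours. It never classifies components (your reduction of components through parts (3), (4) of \lemref{element31} and \propref{simpleCpd} is legitimate but idle here; also, your ``central $p$-part'' worry evaporates in one line, since $\Phi(G)=1$ gives $\gfitt{G}=\soc{G}$, whose components are simple). Instead it takes a minimal counterexample with respect to $|G|+d$: a nonabelian minimal normal $E$ forces a second minimal normal $D$; inducting inside $\norm{G}{P}$ for $P\in\syl{p}{D}$ shows $D=P$ is elementary abelian; further inductions (through $G/M$ and through the complement $H$ of $D$) force $|D|=p^d$ and $\gfitt{G}=D\times E$ with $|E|_p=p^d$. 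Then, rather than exhibiting a subgroup \emph{without} a complement, the paper chooses $X\leq D$ of order $p^a>1$ and $Y\leq E$ of order $p^{d-a}>1$ --- possible precisely because $d\geq 2$ --- and \emph{uses} the complement $M$ of $XY$ guaranteed by $G\in\ncpm{d}$: one verifies $\norm{E}{E\cap M}=E$, so $E\cap M\nor G$, while $E\leq M$ would force $Y\leq XY\cap M=1$; hence $E\cap M=1$, so $|E|\leq |G:M|=p^d$ and $E$ is an abelian $p$-group, the desired contradiction. In short, the contradiction comes from the existence of a complement to a subgroup straddling the abelian factor $D$ and the nonabelian factor $E$, not from the nonexistence of complements to diagonals inside $\E{G}$, and none of the reductions that manufacture $D$ with $|D|=p^d$ appear in your proposal.
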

\begin{proof}
  As $\Phi(G)=1$, $\gfitt{G}=\soc{G}$. 
  Also since $\oh{p'}{G}=1$, by part (2) of Lemma \ref{element31} 
  the $p$-part of the order of every minimal normal subgroup $p^e$ satisfies $1\leq e\leq d$.
  Let $G$ be a counterexample with smallest possible sum $|G|+d$. 
  Then there exists a nonabelian minimal normal subgroup $E$ of $G$.
  Note that $|\mathrm{Soc}(G)|_p=|G|_p>p^d$ and also $|E|_p=p^e\leq p^d$,
  and so there exists another minimal normal subgroup $D$ of $G$.
	
	Let now $P\in\mathrm{Syl}_p(D)$. We show that $D=P$.
    Write $\gfitt{G}=D\times E\times C$ where $C$ is a direct product of some minimal normal subgroups of $G$, and let $L=\norm{G}{P}$ and $\widetilde{L}=L/\ohphi{L}$.
	Observe that $\widetilde{P}\times\widetilde{E}\times\widetilde{C}\leq \gfitt{\widetilde{L}}$ and $|\widetilde{L}|_p=|\widetilde{P}\times\widetilde{E}\times\widetilde{C}|_p$,
	and so $\widetilde{L}$ satisfies the hypothesis of this lemma.
	Thus, by the minimality of $G$, $G=L=\norm{G}{P}$, that is $D=P$ is an abelian minimal normal subgroup of $G$.
    
	We next show that $e=d$.
	Assume $e<d$. 
	Let $N/D=\oh{p'}{G/D}$, $M/N=\Phi(G/N)$ such that $|M/N|_p=p^s$ and $\overline{G}=G/M$.  
	Then by part (3) of Lemma \ref{element31} $\overline{G}\in\ncpm{d-e-s}$.
	Since $E\cap M=1$, $\overline{E}$ is minimal normal in $\overline{G}$.
    It is routine to check that $\overline{G}$ satisfies the hypothesis of this lemma.
	So, by the minimality of $G$, $E\cong \overline{E}\leq \E{\overline{G}}=1$, a contradiction.
	
	Therefore $e=d$.
	Recall that $|D|=p^d$, and hence $G=D\rtimes H$, where $H\leq G$, as $G\in\ncpm{d}$.  
    We then show that $\gfitt{G}=D\times E$.
    To see this, as $\oh{p'}{G}=1$, it suffices to show $|\gfitt{G}|_p=p^{2d}$.
    Assume $|\gfitt{G}|_p>p^{2d}$.
    Then $|H|_p>p^d$, and $H\in\ncpm{d}$ by part (1) of Lemma \ref{element31}.
    Let $\widehat{H}=H/\ohphi{H}$.
	Clearly, $\widehat{H}$ satisfies the hypothesis of this lemma,
	and hence $\E{\widehat{H}}=1$ by the minimality of $G$.
	However, $\widehat{H}$ contains a nonabelian minimal normal 
    subgroup which is isomorphic to $E$ as $H\cong G/D$,
	a contradiction.
    Therefore $\gfitt{G}=D\times E$ where $D$ and $E$ ($\leq \E{G}$) are distinct minimal normal subgroups of $G$ so that $|D|=p^d$ and $|E|_p=p^d$.
	Let $X\leq D$ be of order $p^a$($>1$) and $Y\leq E$ be of order $p^{d-a}$($>1$).
	Hence $G=XY\cdot M$ where $M$ is a complement of $XY$ in $G$ as $G\in\ncpm{d}$. 
    Then $|G:M|=p^d$ and $G=(D\times E)M$.
    Since $XY\leq \gfitt{G}$, $\gfitt{G}\cap M$ is a complement of $XY$ in $\gfitt{G}$.
	Next we conclude a contradiction by proving $E$ is abelian.
	To see that, it suffices to show $E\cap M=1$.
	We know that $E\cap M\unlhd DM$ and $E$ is minimal normal in $G$,
	and so it suffices to show that $\norm{E}{E\cap M}=E$.
	First, since $D$ is abelian, $D\cap M\unlhd G$. 
	It follows from $D\cap M<D$ that $D\cap M=1$.
	Also, $|\gfitt{G}:\gfitt{G}\cap M|=|\gfitt{G}M:M|=p^d$.
	We conclude that 
	$$\gfitt{G}=(M\cap \gfitt{G})D\leq  \norm{\gfitt{G}}{E\cap M}D=\norm{E}{E\cap M} \times D\leq \gfitt{G}.$$
	Thus $\norm{E}{E\cap M}=E$, as desired.
\end{proof}

  \begin{prop}\label{F*=Op'}
	Let $G\in\ncpm{d}$ be such that $\oh{p'}{G}=1$ and $|G|_p=p^n>p^d$.
	Write $\overline{G}=G/\Phi(G)$ and $|\Phi(G)|_p=p^s$.
	Assume that $G/\gfitt{G}$ is a $p'$-group. Then one of the following is true.
	
	{\rm (1)} $\overline{G}\in\ncp$. In particular, if $\gfitt{\overline{G}}=\fitt{\overline{G}}$, then $\overline{G}=\overline{H}\ltimes \fitt{\overline{G}}$ is supersolvable where $\overline{H}$ is abelian with exponent dividing by $p-1$.
	
	{\rm (2)}
	$\overline{G}=\overline{H}\ltimes \overline{P}$
	where $\overline{P}$ is a direct product of some minimal normal subgroups 
	$E_1,~E_2,~\dots,~E_t$ with the same order, say $p^e$ where $e>1$, and $e\mid (n-s,d-s)$.
	Moreover, 
	all $E_i$ are isomorphic to an irreducible $\mathbb{F}_p[H]$-module $E$ which is not absolutely irreducible.
\end{prop}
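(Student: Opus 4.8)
The plan is to reduce at once to the Frattini-free situation, read off the structure of $\gfitt{G}$, and then split into two regimes according to whether $\E{\overline G}$ vanishes, using Lemma~\ref{equalppart} as the engine that ties the vanishing of $\E{\overline G}$ to the $p$-parts of the minimal normal subgroups.

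\emph{Reduction to $\Phi(G)=1$.} First I would note that $\Phi(G)$ is a $p$-group: its Hall $p'$-part is characteristic in $\Phi(G)$, hence normal in $G$, hence trivial since $\oh{p'}{G}=1$; thus $|\Phi(G)|=p^{s}$. Moreover $s<d$, for any subgroup of order $p^{d}$ lying in $\Phi(G)$ would be a non-generator and so could not be complemented, against $G\in\cpm{d}$. By part (3) of Lemma~\ref{element31}, $\overline G=G/\Phi(G)\in\ncpm{d-s}$, and by the remark opening this section together with $\Phi(\overline G)=1$ one has $\oh{p'}{\overline G}=\Phi(\overline G)=1$; finally $\overline G/\gfitt{\overline G}$ is again a $p'$-group because $\gfitt{G}\Phi(G)/\Phi(G)\le\gfitt{\overline G}$ still contains a Sylow $p$-subgroup. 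Since $e\mid(n-s,d-s)$ in (2) is exactly $e\mid(n',d')$ for the reduced parameters $n'=n-s$, $d'=d-s$, it suffices to prove the statement assuming $\Phi(G)=1$ (I will write $d,n$ for $d',n'$). Now $\gfitt{G}=\soc{G}=V\times S$, where $V=\fitt{G}=\oh{p}{G}$ is elementary abelian (as $\Phi(\fitt{G})\le\Phi(G)=1$ and $\oh{p'}{G}=1$) and $S=\E{G}$ is a direct product of nonabelian simple groups (because $\z{\E{G}}=\Phi(\E{G})\le\Phi(G)=1$); since $G/\gfitt{G}$ is a $p'$-group, $|G|_p=|\gfitt{G}|_p=p^{n}>p^{d}$.

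\emph{The module reformulation.} When $S=1$ we have $G=H\ltimes V$ with $H$ a Hall $p'$-subgroup acting faithfully (as $\cent{G}{V}=V$), and $V$ is a semisimple $\FF_p[H]$-module by Maschke. The key observation is that a subgroup $X\le V$ of order $p^{d}$ is complemented in $G$ if and only if $X$ has an $H$-invariant complement in $V$: writing a complement of $X$ as $(K\cap V)\rtimes H^{g}$ by Schur--Zassenhaus, the subgroup $W=K\cap V$ is an $H^{g}$-submodule transversal to $X$, and since $V$ is abelian $H^{g}$ and $H$ act identically on $V$, so $W$ is an $H$-submodule. Hence $G\in\cpm{d}$ forces
\begin{equation*}
\text{every $d$-dimensional subspace of $V$ has an $\FF_p[H]$-submodule complement.}\tag{$\star$}
\end{equation*}
By part (2) of Lemma~\ref{element31} every minimal normal subgroup of $G$ has $p$-part at most $p^{d}$. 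If all these $p$-parts are equal, Lemma~\ref{equalppart} gives $\E{G}=1$, so $G=H\ltimes V$ and all irreducible constituents of $V$ share one dimension $e$; since the complement in $(\star)$ has dimension $n-d$ and every submodule of $V$ has dimension divisible by $e$, we get $e\mid d$, and $e\mid n$ is automatic, so $e\mid(n,d)$. If $e=1$ then $H$ embeds in a product of copies of $\FF_p^{\times}$, hence is abelian of exponent dividing $p-1$ and $G$ is supersolvable, which is conclusion (1). If $e>1$, I would use $(\star)$ to show $V$ is homogeneous and its constituent $E$ is not absolutely irreducible: a subspace projecting onto proper pieces of two distinct homogeneous components, respectively a subspace generated by a common vector across the copies of an absolutely irreducible constituent, admits no submodule complement, and ruling these out is the (careful but routine) linear algebra behind conclusion (2), parallel to the homogeneous-module analysis of \cite{qian2015}.

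\emph{The main obstacle.} The decisive case is when the $p$-parts of the minimal normal subgroups are not all equal; by the contrapositive of Lemma~\ref{equalppart} this is exactly where $\E{G}$ may be nontrivial, and here the target is conclusion (1), namely $G\in\ncp$. I would argue by induction on $|G|+d$: choosing a minimal normal subgroup $D$ of least $p$-part $p^{e}$ (so $e<d$), part (3) of Lemma~\ref{element31} gives $G/D\in\ncpm{d-e}$ with the same hypotheses, to which induction applies; meanwhile the nonabelian composition factors inside $S=\E{G}$ are confined by Lemma~\ref{element31}(4) and Proposition~\ref{simpleCpd} to the very short list of simple $\cpm{\cdot}$-groups (cyclic or dihedral Sylow, $p$-part exactly the relevant power). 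The crux — and the step I expect to be hardest — is to transfer the complementation property back from $G/D$ and from the rigid factors of $S$ to $G$ itself, showing that the only way $G$ can satisfy $\cpm{d}$ while carrying a nonabelian chief factor is to have every subgroup of order $p$ complemented, i.e. $G\in\ncp$; the purely module-theoretic case $S=1$ is comparatively routine once $(\star)$ is in hand.
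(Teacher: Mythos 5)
Your reduction to $\Phi(G)=1$ and your handling of the equal-$p$-parts case are sound and essentially follow the paper's route (Lemma~\ref{equalppart} to force $\E{G}=1$, then the module analysis, deferring homogeneity and non-absolute irreducibility to \cite{qian2015} and \cite{qian2020}); your derivation of $e\mid d$ from the $(n-d)$-dimensional submodule complement is even a little slicker than the paper's division-with-remainder argument. But there is a genuine gap exactly where you flag it: the case where the minimal normal subgroups have unequal $p$-parts. You treat this as the ``decisive case'' to be settled by an induction transferring complements back from $G/D$ and from the simple factors of $\E{G}$ via Proposition~\ref{simpleCpd}, and you leave that transfer unproven. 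In fact that whole case is \emph{vacuous} under the hypotheses, and the paper kills it with a short counting argument rather than any complement-transfer: in a minimal counterexample one reduces to $\gfitt{G}=D\times E$ with $D,E$ minimal normal and $|E|_p=p^e<p^d$; then $G/E\in\ncpm{d-e}$ by part (3) of Lemma~\ref{element31}, and since $DE/E$ is minimal normal in $G/E$, part (2) gives $|D|_p\le p^{d-e}$, whence $p^d<|\gfitt{G}|_p=|D|_p|E|_p\le p^d$ --- a contradiction. (The hypothesis $|G|_p=|\gfitt{G}|_p>p^d$ is what makes this work.) So the correct move is not to prove $G\in\ncp$ when the parts are unequal, but to show unequal parts cannot coexist with $|\gfitt{G}|_p>p^d$ at all; your planned induction is aimed at a conclusion that never needs to be reached, and as written the hardest step of your proof simply does not exist.

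A secondary point: in the $e=1$ case you conclude that $G$ is supersolvable and declare ``which is conclusion (1)'', but conclusion (1) asserts $\overline{G}\in\ncp$, i.e.\ every subgroup of order $p$ is complemented; supersolvability alone is not the stated conclusion, and you would still need the converse direction of \cite[Proposition E]{qian2015} to close that step. The paper instead proves $G\in\ncp$ directly, by embedding $G$ subdirectly into $\bigtimes_{i=1}^{d} G/N_i$ with each $G/N_i\in\ncp$ (where $N_i$ is the product of $d-1$ of the minimal normal subgroups) and using that a direct product of nontrivial $\cp$-groups is again one; note also that the paper's $e=1$ argument does not presuppose $\E{G}=1$, whereas yours routes everything through Lemma~\ref{equalppart} first --- harmless here, but worth noticing that the two conclusions are obtained by genuinely different mechanisms.
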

\begin{proof}
	Note that $\oh{p'}{G}=1$, and so $|\Phi(G)|=p^s$ and also $\oh{p'}{\overline{G}}=1$.
	Since every subgroup of $\Phi(G)$ has no complement in $G$, it follows from $G\in\ncpm{d}$ that $p^s<p^d$.
	Applying part (3) of Lemma \ref{element31} to $\overline{G}$, we have $\overline{G}\in\ncpm{d-s}$.
	Also as $\overline{\gfitt{G}}\leq \gfitt{\overline{G}}$, $\overline{G}/\gfitt{\overline{G}}$ is a $p'$-group.
	Thus, without loss of generality, we may assume that $\Phi(G)=1$.
	Hence $\gfitt{G}=\soc{G}$, and $|\gfitt{G}|_p=|G|_p>p^d$.
	Write 
	$\gfitt{G}=E_1\times \cdots\times E_t$ where $E_i$ are minimal normal subgroups of $G$.
	Since $\oh{p'}{G}=1$, $p\mid |E_i|$ for $1\leq i\leq t$. 
	Also by part (2) of Lemma \ref{element31}, $|E_i|_p\leq p^d$ for $1\leq i\leq t$.


	
\smallskip
   
   We claim first that the orders of all minimal normal subgroups of $G$ have equal $p$-part, say $p^e$.
   Let $G$ be a counterexample with minimal possible sum $|G|+d$.
   Since there exists at least one minimal normal subgroup of $G$ with $p$-parts less than $p^d$,
	by part (3) of Lemma \ref{element31} $F^*(G)=D\times E$ where $D$ and $E$ are both minimal normal in $G$. 
	Without loss of generality, we may assume that $|E|_p=p^e<p^d$.
	Since $|E|_p=p^e<p^d$, by part (3) of Lemma \ref{element31} $G/E\in\ncpm{d-e}$.
	It follows by part (2) of Lemma \ref{element31} that $|D|_p=|DE/E|_p\leq p^{d-e}$ as $DE/E$ being minimal normal in $G/E$.
    We conclude that $p^d<|F^*(G)|_p=|D|_p|E|_p\leq p^d$, a contradiction.
	Thus the orders of all minimal normal subgroups of $G$ share equal $p$-part.
	As a consequence, $e\mid n$. 
	
	Write $d=ke+r$ where $0\leq r<e$, and let $N=E_1\times\cdots\times E_k$ where $k\leq t=\frac{n}{e}$.
	If $r>0$, then $G/N\in\ncpm{r}$ by part (3) of Lemma \ref{element31}.
	Now $E_{t}N/N$ is minimal normal in $G/N$, it follows from part (2) Lemma \ref{element31} that $p^e=|E_{t}|_p=|E_{t}N/N|_p\leq p^r$ which contradicts to $r<e$. 
	Consequently, $e\mid (d,n)$.


	
	\smallskip

    We claim next that if $e=1$, then (1) holds.
    Since $|G|_p>p^d\geq p$, by \cite[Proposition E]{qian2015}, it suffices to prove $G\in\ncp$.
	Since $e=1$, we have $t=n>d$. 
	Let $N=E_1\times E_2\times \cdots\times E_d$ and $N_i$ the direct product of groups in 
	$\{E_j\mid 1\leq j\leq d~\text{and}~j\neq i\}$.
    As $|N_i|_p=p^{d-1}$, by part (3) of Lemma \ref{element31} $G/N_i\in\ncp$.
	Also, observing that $\bigcap_{i=1}^d N_i=1$, we have that 
  \[
	  G=G/\bigcap_{i=1}^d N_i\lesssim \bigtimes_{i}^d G/N_i,
  \]
    so we conclude that $G\in\ncp$ since the direct product of two nontrivial $\cp$-groups is still a nontrivial $\cp$-group.



	\smallskip	

     Finally, we show that if $e>1$, then (2) holds.
	 Let $P\in\syl{p}{G}$.
	By Lemma \ref{equalppart}, $\E G=1$, and hence $P=\gfitt{G}=\fitt{G}=\soc{G}$.
	 Since $\Phi(G)=1$, it follows from \cite[Lemma 2.4]{qian2020} that $\Phi(P)=\Phi(G)\cap P=1$, so $P$ is elementary abelian.
     Applying \cite[Proposition E]{qian2015} to $G$, we have that $E_i$ is isomorphic to an $\mathbb{F}_p[H]$-module $E$ for $1\leq i\leq t$.
     Finally, one readily check by \cite[Corollary 2.10, Lemma 2.9(5)]{qian2020} that $E$ is not absolutely irreducible.
\end{proof}


\begin{lem}\label{end}
	Let $H$ be a group
	and $V$ a faithful irreducible $\mathbb{F}_p[H]$-module.
	Then $\dim_{\mathbb{F}_p}(\mathrm{End}_{\mathbb{F}_p[H]}(V))\mid \dim_{\mathbb{F}_p}(V)$. Moreover, $\dim_{\mathbb{F}_p}(\mathrm{End}_{\mathbb{F}_p[H]}(V))=\dim_{\mathbb{F}_p}(V)$ if and only if $H$ is cyclic.
\end{lem}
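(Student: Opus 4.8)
The plan is to let $E:=\mathrm{End}_{\mathbb{F}_p[H]}(V)$ and first pin down its ring structure. Since $V$ is irreducible, Schur's Lemma tells us $E$ is a division ring, and since $V$ is finite-dimensional over $\mathbb{F}_p$ the ring $E$ is finite, so by Wedderburn's little theorem it is a field, say $E\cong\mathbb{F}_{p^e}$ with $e=\dim_{\mathbb{F}_p}(E)$. Because the elements of $E$ are $\mathbb{F}_p$-linear maps of $V$, the space $V$ is naturally a vector space over $E$, and comparing $\mathbb{F}_p$-dimensions gives $\dim_{\mathbb{F}_p}(V)=\dim_{\mathbb{F}_p}(E)\cdot\dim_{E}(V)=e\cdot\dim_E(V)$. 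This immediately yields the divisibility $\dim_{\mathbb{F}_p}(E)\mid\dim_{\mathbb{F}_p}(V)$, and simultaneously shows that the asserted equality $\dim_{\mathbb{F}_p}(E)=\dim_{\mathbb{F}_p}(V)$ is equivalent to $\dim_E(V)=1$.

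Next I would prove the forward implication of the biconditional: if $\dim_E(V)=1$, then $H$ is cyclic. By definition every element of $E$ commutes with the $H$-action on $V$, which means that each $\rho(h)$ (for $h\in H$, where $\rho$ denotes the representation) is an $E$-linear automorphism of $V$. Hence $\rho$ factors through $\mathrm{GL}_E(V)\cong\mathrm{GL}_1(E)=E^{\times}$. Since $V$ is a faithful module, $\rho$ is injective, so $H$ embeds into $E^{\times}$. As $E^{\times}$ is the multiplicative group of a finite field and is therefore cyclic, $H$ is cyclic.

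For the converse, suppose $H$ is cyclic. Then $\mathbb{F}_p[H]$ is commutative, so its image $A$ under $\mathbb{F}_p[H]\to\mathrm{End}_{\mathbb{F}_p}(V)$ is a commutative $\mathbb{F}_p$-subalgebra acting faithfully and irreducibly on $V$ (faithfully because $A$ is by construction a subring of $\mathrm{End}_{\mathbb{F}_p}(V)$, and irreducibly since the $A$-submodules of $V$ coincide with the $\mathbb{F}_p[H]$-submodules). Choosing a nonzero $v\in V$, simplicity gives $Av=V$, so $V\cong A/\mathrm{Ann}(v)$ with $\mathrm{Ann}(v)$ a maximal ideal; moreover the annihilators of any two nonzero vectors coincide, so this common ideal equals $\mathrm{Ann}(V)$, which is $0$ by faithfulness. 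Thus $A$ is a field and $V\cong A$ is one-dimensional over $A$. Since $A$ is the image of $\mathbb{F}_p[H]$, we have $E=\mathrm{End}_{\mathbb{F}_p[H]}(V)=\mathrm{End}_A(V)=A$, whence $\dim_{\mathbb{F}_p}(E)=\dim_{\mathbb{F}_p}(A)=\dim_{\mathbb{F}_p}(V)$, completing the biconditional.

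The routine parts are the Schur–Wedderburn identification of $E$ as a finite field and the dimension count. The step that requires the most care is the converse direction, namely verifying that the commutativity of $\mathbb{F}_p[H]$ forces the image algebra $A$ to be a field and identifying this $A$ with the centralizer $E$; the annihilator argument above is the crux, and it is exactly where faithfulness of $V$ is used.
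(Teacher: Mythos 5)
Your proof is correct, but it takes a genuinely different route from the paper. The paper extends scalars to an algebraic closure $K$ of $\mathbb{F}_p$: by Huppert's theorem V.14.12, $K\otimes_{\mathbb{F}_p}V$ decomposes as a direct sum of $t$ pairwise non-isomorphic faithful absolutely irreducible $K[H]$-modules of common dimension, and since endomorphism rings commute with scalar extension (Doerk--Hawkes B.5.4), $\dim_{\mathbb{F}_p}\mathrm{End}_{\mathbb{F}_p[H]}(V)=t$, while $\dim_{\mathbb{F}_p}V=t\dim_K(V_1)$; divisibility is then immediate, and equality holds iff $\dim_K(V_1)=1$, which for a faithful absolutely irreducible module happens iff $H$ is cyclic. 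You instead stay entirely over $\mathbb{F}_p$: Schur's lemma plus Wedderburn's little theorem make $E=\mathrm{End}_{\mathbb{F}_p[H]}(V)$ a finite field, the $E$-vector-space structure on $V$ gives $\dim_{\mathbb{F}_p}V=\dim_{\mathbb{F}_p}(E)\cdot\dim_E(V)$, the equality case $\dim_E(V)=1$ embeds $H$ into the cyclic group $E^{\times}$, and for the converse you correctly pass to the image algebra $A$ of $\mathbb{F}_p[H]$ (so that faithfulness of $A$ on $V$ is tautological, sidestepping the genuine gap between $H$-faithfulness and $\mathbb{F}_p[H]$-faithfulness) and show $A$ is a field with $V\cong A$; your annihilator step is sound because commutativity of $A$ makes $\mathrm{Ann}(v)$ a two-sided ideal killing $Av=V$. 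Your argument is more elementary and self-contained — indeed the paper itself invokes the Schur--Wedderburn identification of the endomorphism ring as a finite field later, in the proof of Theorem 3.5 — whereas the paper's scalar-extension computation buys the extra datum that $\dim_{\mathbb{F}_p}E$ equals the number of absolutely irreducible constituents, which dovetails with the "not absolutely irreducible" bookkeeping used in Proposition 3.2 and Theorem 3.5.
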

\begin{proof}
	Let $K$ be an algebraic closure of $\mathbb{F}_p$.
	By \cite[V, Theorem 14.12]{huppert}, 
	\begin{center}
		$K\otimes_{\mathbb{F}_p} V\cong_{K[H]} V_1\oplus\cdots \oplus V_t$ 
	\end{center}
	where $V_i$ are non-isomorphic
	faithful absolutely irreducible $K[H]$-modules with the same dimension.
	Note that by \cite[Chapter B, Lemma 5.4]{doerk1992}  
	\begin{center}
		$K\otimes_{\mathbb{F}_p} \mathrm{End}_{\mathbb{F}_p[H]}(V)\cong_K \mathrm{End}_{K[H]}(K\otimes_{\mathbb{F}_p}V)$
	\end{center}
	as a $K$-linear space, we have 	
	\begin{align*}
		\dim_{\mathbb{F}_p}(\mathrm{End}_{\mathbb{F}_p[H]}(V)) &=\dim_K(K\otimes_{\mathbb{F}_p}\mathrm{End}_{\mathbb{F}_p[H]}(V)) \\
		&=\dim_K\mathrm{End}_{K[H]}(K\otimes_{\mathbb{F}_p}V)\\
		&=\dim_K(\mathrm{End}_{K[H]}(V_1\oplus\cdots \oplus V_t))\\
			 &=\sum_{i=1}^t \dim_K(\mathrm{End}_{K[H]}(V_i))=t.
	\end{align*}
	Since $\dim_{\mathbb{F}_p}(V)=\dim_K(K\otimes_{\mathbb{F}_p}V)=\dim_K(V_1\oplus\cdots\oplus V_t)=t\dim_K(V_1)$,
	it follows that $\dim_{\mathbb{F}_p}(\mathrm{End}_{\mathbb{F}_p[H]}(V))\mid \dim_{\mathbb{F}_p}(V)$.
	Note that $\dim_{\mathbb{F}_p}(\mathrm{End}_{\mathbb{F}_p[H]}(V))=\dim_{\mathbb{F}_p}(V)$ if and only if $\dim_K(V_1)=1$,
	and since $V_1$ is a faithful absolutely irreducible $K[H]$-module, we conclude that $\dim_{\mathbb{F}_p}(\mathrm{End}_{\mathbb{F}_p[H]}(V))=\dim_{\mathbb{F}_p}(V)$ if and only if $H$ is cyclic.
\end{proof}

Now we are ready to prove Theorem B.

\begin{thm}\label{hv}
	Assume that a $p'$-group $H$ acts faithfully on an elementary abelian $p$-group $V$.
	Suppose $G=H\ltimes V$ where $|V|=p^n$.
	Then $G\in\ncpm{d}$ for a fixed positive integer $d<n$ if and only if 
	one of the following statements holds.

	{\rm (1)} $G$ is supersolvable.

	{\rm (2)} $H$ is cyclic and
	$V$ is a homogeneous $\mathbb{F}_p[H]$-module with all its irreducible $\mathbb{F}_p[H]$-submodules having dimension $e$$(>1)$ so that $e\mid (d,n)$.
\end{thm}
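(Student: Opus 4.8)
The plan is to reduce the complemented-subgroup condition to a statement about $\mathbb{F}_p[H]$-submodules of $V$, apply \propref{F*=Op'} to obtain the gross structure, and then carry out the genuinely new work in the homogeneous, non-supersolvable case. First I would record the reductions. Since $V$ is a normal elementary abelian Sylow $p$-subgroup and $H$ acts faithfully, $\oh{p'}{G}=1$ (a nontrivial normal $p'$-subgroup would centralise the normal abelian $V$), and $\Phi(G)=1$ by \lemref{ea}(1). As $\E G$ centralises $\fitt G=V$ and $\cent G V=V$, one gets $\E G=1$, so $\gfitt G=\fitt G=V=\soc G$ is the unique Sylow $p$-subgroup; hence every subgroup of order $p^d$ is a $d$-dimensional $\mathbb{F}_p$-subspace $U\le V$. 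The key elementary observation (via Schur--Zassenhaus, and Maschke so that $V$ is a semisimple $\mathbb{F}_p[H]$-module) is that such a $U$ is complemented in $G$ if and only if $U$ admits an $\mathbb{F}_p[H]$-submodule complement in $V$. Thus $G\in\ncpm d$ if and only if every $d$-dimensional subspace of $V$ has a submodule complement.

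For the forward direction I would apply \propref{F*=Op'} with $s=0$ (as $\Phi(G)=1$) and $\gfitt G=\fitt G$. Case (1) of that proposition yields directly that $G=H\ltimes V$ is supersolvable, which is conclusion (1). Case (2) gives that $V$ is homogeneous, $V\cong E^t$ for an irreducible, non-absolutely-irreducible $\mathbb{F}_p[H]$-module $E$ of dimension $e>1$ with $e\mid(d,n)$. Homogeneity together with faithfulness of $H$ on $V$ forces $H$ to act faithfully on $E$, so by \lemref{end} the field $D=\mathrm{End}_{\mathbb{F}_p[H]}(E)=\mathbb{F}_{p^f}$ satisfies $f\mid e$, with $f=e$ (equivalently $m:=\dim_D E=1$, equivalently $H$ cyclic) being exactly conclusion (2). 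The remaining, and main, task is therefore to rule out $m\ge 2$.

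This is the heart of the argument. First I reduce to $d=e$: writing $d=ke$ and choosing a submodule $S\cong E^{k-1}$, \lemref{element31}(3) gives $G/S=H\ltimes(V/S)\in\cpm e$, where $V/S\cong E^{t'}$ with $t'=t-k+1\ge 2$ (since $n>d$ forces $t>k$) is still homogeneous and faithful with the same $E$, $D$, $m$, so the criterion above still applies. In $V\cong E^{t'}$ the submodules of codimension $e$ are exactly the $W_\lambda=\{(x_1,\dots,x_{t'}):\sum_i\lambda_i x_i=0\}$ for $\lambda\in D^{t'}\setminus\{0\}$, so an $e$-dimensional $U$ has a submodule complement iff some $\ell_\lambda\colon(x_i)\mapsto\sum_i\lambda_i x_i$ is injective on $U$. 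Assuming $m\ge 2$, I fix a $D$-hyperplane $E_0\subsetneq E$ and $\mathbb{F}_p$-linear maps $g_1,\dots,g_{t'}\colon E\to E_0$ with $\bigcap_i\ker g_i=0$, and set $U=\{(g_1x,\dots,g_{t'}x):x\in E\}$; then for every $\lambda$ the image $\ell_\lambda(U)\subseteq E_0$ is proper (as $E_0$ is $D$-stable), so $\ell_\lambda|_U$ is singular and $U$ has no submodule complement. This contradicts $G/S\in\cpm e$, whence $m=1$ and $H$ is cyclic.

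For the converse I verify the submodule-complement criterion in each case. In case (2), $H$ cyclic gives $m=1$, so $V\cong D^{t}$ with $D=\mathbb{F}_{p^e}$ and the submodules are exactly the $D$-subspaces; since $e\mid d$, a short union-bound count shows that among the $D$-subspaces of $D$-codimension $k=d/e$ one must avoid the $ke$-dimensional subspace $U$, giving the complement. In case (1), supersolvability makes $V$ a direct sum of homogeneous components on each of which $H$ acts by scalars, and an easy induction on $\dim V$ produces a graded (hence submodule) complement of any subspace. I expect the crux---the obstruction construction of the third paragraph, which shows that a non-cyclic $H$ violates $\ncpm d$---to be the main obstacle; the reductions and the converse are routine once the module-theoretic criterion is in place.
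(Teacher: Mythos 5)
Your proposal is correct, but it takes a genuinely different route from the paper's proof of Theorem~\ref{hv} at every essential point. Your organizing device --- the criterion that a subgroup $U\leq V$ of order $p^d$ is complemented in $G=H\ltimes V$ if and only if $U$ has an $\mathbb{F}_p[H]$-submodule complement in $V$ (a complement $K$ yields the submodule $K\cap V\trianglelefteq KV=G$ of order $p^{n-d}$ meeting $U$ trivially; conversely $H\ltimes W$ complements $U$) --- is only implicit in the paper, and you use it to turn both implications into linear algebra over $D=\mathrm{End}_{\mathbb{F}_p[H]}(E)$. For the hard direction the paper instead runs a minimal-counterexample argument on $H$: it reduces to $H$ minimal nonabelian, picks a normal abelian $A$ of prime index, splits into two Clifford-theoretic cases according to whether $W_A$ is irreducible, and in the irreducible case embeds $H$ in $\GL(e,p)$, invokes the already-proved converse implication to get $\cent{\GL(e,p)}{A}\ltimes V\in\ncpm{e}$, and finally constructs an uncomplemented subgroup $X=Z\times\langle f(w_0)\rangle$ using the count of minimal normal subgroups in \cite[Ch.\ B, Prop.\ 8.2]{doerk1992}. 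Your replacement --- reduce to $d=e$ via Lemma~\ref{element31}(3), identify the codimension-$e$ submodules of $E^{t'}$ with the kernels of the functionals $\ell_\lambda$, $\lambda\in D^{t'}\setminus\{0\}$, and, when $m=\dim_D E\geq 2$, compress a copy of $E$ into a $D$-hyperplane $E_0$ via maps $g_i$ with $\bigcap_i\ker g_i=0$, so that every $\ell_\lambda$ is singular on $U=\{(g_1x,\dots,g_{t'}x)\}$ --- is a single uniform construction (in effect a generalization of the paper's $X=Z\times\langle f(w_0)\rangle$, which handles only $m>1$ arising from noncyclic minimal nonabelian $H$) that avoids Clifford theory, the case analysis, and any circular appeal to the converse; I checked the details and they work, since $d<n$ gives $t'\geq 2$, which is exactly what the existence of the $g_i$ requires when $m\geq 2$. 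Your converse also differs: where the paper inducts on $|G|$, counts minimal normal subgroups ($|\Omega|>p^e$) to peel off a direct summand, and then upgrades $\cpm{e}$ to $\cpm{d}$ by Lemma~\ref{element31}(5), you complement order-$p^d$ subgroups in one step by the union bound $(q^k-1)(q^{t-k}-1)<q^t-1$ with $q=p^e$, $k=d/e$, and in the supersolvable case you produce a graded complement by induction where the paper embeds $G$ into a direct product of $\ncp$-groups. What the paper's route buys is maximal reuse of the prior $\cp$ and partial-CAP machinery of \cite{qian2015,qian2020}; what yours buys is a self-contained, more elementary argument that isolates the true module-theoretic dichotomy ($\dim_D E=1$ versus $\dim_D E\geq 2$) governing the theorem.
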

\begin{proof}
	($\Leftarrow$) Write $V=V_1\times \cdots \times V_t$ where $V_i$ are minimal normal subgroups of $G$,
	and denote $|V_i|=p^e$,
	we claim that $G\in\ncpm{e}$.

	Suppose $e=1$. 
    Let $N_i$ be the direct product 
	of groups in $\{V_j\mid 1\leq j\leq d~\text{and}~j\neq i\}$.
	Hence $|N_i|_p=p^{d-1}$ and $\bigcap_{i=1}^d N_i=1$.
	Note that $G/N_i\in\ncp$ for $1\leq i\leq d$ by part (3) Lemma \ref{element31} and 
  \[
	  G=G/\bigcap_{i=1}^d N_i\lesssim \bigtimes_{i}^d G/N_i,
  \]
    we conclude by part (1) of Lemma \ref{element31} that $G\in\ncp$ since the direct product of two nontrivial $\cp$-groups is still a nontrivial $\cp$-group.

	Suppose $e>1$. 
	Then $H$ is cyclic and $V_i$ are isomorphic irreducible $\mathbb{F}_p[H]$-submodules for $1\leq i\leq t$.
	We proceed by induction on $|G|$ to prove that $G\in\ncpm{e}$. 
	Let $\Omega$ be the set of all minimal normal subgroups of $G$.
	Note that by Lemma \ref{end} $|\mathrm{End}_{\mathbb{F}_p[H]}(V_i)|=|V_i|=p^e$ as $H$ being cyclic,
	and hence \cite[Chapter B, Proposition 8.2]{doerk1992} implies that 
	\[
		|\Omega|=\frac{p^{et}-1}{p^e-1}=p^{e(t-1)}+\cdots+p^e+1.
	\]
	Let $X\leq G$ be of order $p^e$. Then $X\leq V$ as $V\in\mathrm{Syl}_p(G)$. 
	Since $|\Omega|> |X|$ and every minimal normal subgroup of $G$ shares only identity, there exists $D\in\Omega$ such that $XD=X\times D$.
	Applying Maschke's theorem to $V$, we have $V=D\times E$ where $E$ is normal in $G$.
    Write $L=H\ltimes E$, and by induction we have $G/D\cong L\in\ncpm{e}$.
	Note that $XD/D\leq G/D$ has order $p^e$, and so $G/D=HD/D\ltimes (XD/D\times B/D)$ where $B/D\unlhd G/D$.
	Therefore $B\unlhd G$ and $X\cap B=1$.
	As a consequence, $HB$ is a complement for $X$ in $G$. 
	Thus $G\in\ncpm{e}$.
	By part (5) of Lemma \ref{element31}, $G\in\ncpm{d}$ as $e\mid (d,n)$.

	($\Rightarrow$) Suppose that $G\in\ncpm{d}$. 
	As $\oh{p'}{G}=1$, every nilpotent normal subgroup of $G$ is a $p$-group.
	Also, since $V$ is a normal elementary abelian Sylow $p$-subgroup of $G$, by \cite[Lemma 2.4]{qian2020} we have that $\Phi(G)=\Phi(G)\cap V=\Phi(V)=1$.
    Applying Proposition \ref{F*=Op'} to $G$, we know that either $G$ is supersolvable where $H$ is abelian with exponent dividing $p-1$ or $V$ is a faithful homogeneous $\mathbb{F}_p[H]$-module with all its irreducible submodules being not absolutely irreducible. 
	
	Now we claim that $H$ is abelian.
	Let $G$ be a minimal counterexample. 
	Then $V$ is a faithful homogeneous $\mathbb{F}_p[H]$-module with all its irreducible $\mathbb{F}_p[H]$-submodules not
	absolutely irreducible,
	also since by part (1) of Lemma \ref{element31} $A\ltimes V\in\ncpm{d}$ for $A<H$, 
	every proper subgroup of $H$ is abelian.
	
	Let $W$ be an irreducible $\mathbb{F}_p[H]$-submodule of $V$. 
	Since $V$ is a faithful $\mathbb{F}_p[H]$-module which is
	the direct product of $t$ copies of $W$ with $t>1$, 
	we have that $W$ is a faithful irreducible $\mathbb{F}_p[H]$-module which is also not absolutely irreducible.
    Write $e=\dim_{\mathbb{F}_p}W$, we have that $G\in\ncpm{e}$ by \cite[Corollary 2.10, Lemma 2.9(4)]{qian2020}.
	By the minimality of $G$, we may assume $t=2$ and write $V=W\times U$ where $U$ is also a faithful irreducible $\mathbb{F}_p[H]$-submodule of $V$ which is isomorphic to $W$ as an $\mathbb{F}_p[H]$-module.

	Since every proper subgroup of $H$ is abelian, $H$ is solvable.
	We claim that $H$ possesses an abelian normal subgroup $A$ with a prime index.  
	To see that, it suffices to show such $A$ exists when $H$ is not nilpotent.
	Now $\fitt{H}<H$ is abelian. 
	Write $A=\fitt{H}$.
	Since $\cent{H}{A}=A$, we conclude that $|H:A|=r$ where $r$ is a prime, as claimed.
	By Clifford's theorem, 
	$W_A$ is either an irreducible $\mathbb{F}_p[A]$-module or
	the direct product of $r$ non-isomorphic irreducible $\mathbb{F}_p[A]$-submodules having same dimension (see \cite[Chapter B, Theorem 7.3]{doerk1992}).
	
	Suppose that $W_A$ is not an irreducible $\mathbb{F}_p[A]$-module. 
	Since $A\ltimes V\in\ncpm{d}$ where $V_A$ is not homogeneous as an $\mathbb{F}_p[A]$-module, by the minimality of $G$, $A$ is abelian with exponent dividing $p-1$. 
	As a consequence, every irreducible $\mathbb{F}_p[A]$-submodule of $W_A$ has dimension 1.
	Now $\dim_{\mathbb{F}_p}W=r$, 
	we know from Lemma \ref{end} that 
	\begin{center}
		$\dim_{\mathbb{F}_p}(\mathrm{End}_{\mathbb{F}_p[H]}(W))=\dim_{\mathbb{F}_p}W=r$ 
	\end{center}
	since $W$ is not absolutely irreducible as $\mathbb{F}_p[H]$-module.
	Consequently, by Lemma \ref{end} $H$ is cyclic, a contradiction.
	
	Suppose that $W_A$ is an irreducible $\mathbb{F}_p[A]$-module.
	Since $V_A$ is a faithful homogeneous $\mathbb{F}_p[A]$-module, $W_A$ is also a faithful irreducible $\mathbb{F}_p[A]$-module. 
	It follows that $A$ is cyclic, so by Schur's lemma and Wedderburn's little theorem $\mathrm{End}_{\mathbb{F}_p[A]}(W_A)$ is a finite field.
    Recall that $e=\dim_{\mathbb{F}_p}(W)$,
    we have by Lemma \ref{end} that 
    \begin{center}
		$\dim_{\mathbb{F}_p}\mathrm{End}_{\mathbb{F}_p[A]}(W_A)=\dim_{\mathbb{F}_p}(W_A)=e$, 
	\end{center}
	and hence $\mathrm{End}_{\mathbb{F}_p[A]}(W_A)\cong \mathbb{F}_{p^e}$.
    Since $W$ and $U$ are isomorphic faithful $\mathbb{F}_p[H]$-modules,
	we may identify $H$ as a subgroup of $\GL(e,p)$,
	and also we may assume that $W$ and $U$ are isomorphic faithful $\mathbb{F}_p[\GL(e,p)]$-modules.
	Then $G\leq \GL(e,p)\ltimes V$.
	Write $T=\cent{\GL(e,p)}{A}\ltimes V$.
	Note that $\mathrm{End}_{\mathbb{F}_p[A]}(W_A)=\cent{\GL(e,p)}{A}\cup \{ 0\}$, and so $\cent{\GL(e,p)}{A}\cong \mathbb{F}_{p^e}^{\times}$ is cyclic.
	Since $A\leq \cent{\GL(e,p)}{A}\leq \GL(e,p)$, $W$ and $U$ are isomorphic faithful irreducible $\mathbb{F}_p[\cent{\GL(e,p)}{A}]$-modules,
	and hence $V$ is a faithful homogeneous $\mathbb{F}_p[\cent{\GL(e,p)}{A}]$-module with its irreducible components having dimension $e$.
    Then by the other implication of this lemma $T\in\ncpm{e}$,
	and hence by part (1) of Lemma \ref{element31} $\cent{\GL(e,p)}{H}\ltimes V$, as a subgroup of $T$, is also a nontrivial $\cpm{e}$-group.  
    Write $\Gamma_0=\cent{\GL(e,p)}{H}$ and $a=\dim_{\mathbb{F}_p}\mathrm{End}_{\mathbb{F}_p[H]}(W)$.
	Since $H$ is nonabelian and $W$ is not absolutely irreducible,
	it follows from Lemma \ref{end} that $1<a<e$.
    Note that $\Gamma_0\leq \cent{\GL(e,p)}{A}$,
	and so $W$ is an $\mathbb{F}_p[\Gamma_0]$-module.
	Let $Z$ be an irreducible $\mathbb{F}_p[\Gamma_0]$-submodule of $W$.
    Since $\Gamma_0=\cent{\GL(e,p)}{H}=\mathrm{End}_{\mathbb{F}_p[H]}(W)^{\times}\cong \mathbb{F}_{p^a}^{\times}$,
    it follows from Lemma \ref{end} that $\dim_{\mathbb{F}_p}Z=\dim_{\mathbb{F}_p}\mathrm{End}_{\mathbb{F}_p[\Gamma_0]}(Z)=\dim_{\mathbb{F}_p}\mathbb{F}_{p^a}=a$.
	Observe that $\Gamma_0=\mathrm{End}_{\mathbb{F}_p[H]}(W)^{\times}$,
	and so $\mathbb{F}_p[\Gamma_0]=\mathrm{End}_{\mathbb{F}_p[H]}(W)$.
	Let $0\neq w_0\in Z$ and write $\mathbb{K}=\mathrm{End}_{\mathbb{F}_p[H]}(W)$, and let $X=\langle w_0^k, w_0^{k}f(w_0)\mid k\in \mathbb{K}\rangle$ where $f:W\rightarrow U$
	is an $\mathbb{F}_p[H]$-isomorphism, and hence $X=\langle w_0^{k}\mid k\in \mathbb{K}\rangle\times \langle f(w_0)\rangle$.
    Since $w_0\in Z$ and $Z$ is an irreducible $\mathbb{F}_p[\Gamma_0]$-module where $\mathbb{K}=\mathbb{F}_p[\Gamma_0]$, 
	we have $X=Z\times \langle f(w_0)\rangle$. 
	Notice that by \cite[Chapter B, Proposition 8.2]{doerk1992} $X$ shares a nontrivial common element with every minimal normal subgroups of $G$.
	However, $|X|=p^{a+1}\leq p^e$ which contradicts $G\in\ncpm{e}$.
	\end{proof}

Finally, we prove Theorem~A, which we state again.

\begin{thm}
	Let $G$ be a finite group such that $|G|_p\geq p^{2d}\geq p^2$.
	Assume $\oh{p'}{G}=1$.
	Then $G\in\ncpm{d}$ if and only if one of the following is true.

	{\rm (1)} $G\in\ncp$. 

	{\rm (2)} $G=H\ltimes P$ where $H$ is a cyclic Hall $p'$-subgroup and $P\in\syl{p}{G}$ is a faithful homogeneous $\mathbb{F}_p[H]$-module with all its 
	irreducible submodules having dimension $e$$(>1)$ so that $e\mid (d,\log_p|P|)$.
\end{thm}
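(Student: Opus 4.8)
The plan is to prove both directions, with the ``if'' part immediate from the machinery already in place. Assuming (1): since $G\in\ncp=\ncpm{1}$, part~(5) of Lemma~\ref{element31} gives $G\in\cpm{d}$, and $|G|_p\geq p^{2d}\geq p^d$ makes this nontrivial, so $G\in\ncpm{d}$. Assuming (2): then $P$ is elementary abelian (it is an $\mathbb{F}_p[H]$-module), $H$ is a $p'$-group acting faithfully, and with $n=\log_p|P|\geq 2d>d$ the hypotheses of Theorem~\ref{hv} hold for $V=P$, whose case~(2) returns $G\in\ncpm{d}$.

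For the ``only if'' direction the decisive use of the strengthened bound $p^{2d}\leq|G|_p$, equivalently $p^d\leq\sqrt{|G|_p}$, is to force the Sylow $p$-subgroup to be elementary abelian. I would invoke \cite[Proposition F]{qian2015}, exactly as in the proof of Proposition~\ref{simpleCpd}, to reduce to the case that $P\in\syl{p}{G}$ is elementary abelian. Granting this, part~(1) of Lemma~\ref{ea} gives $\Phi(G)=1$ and part~(2) gives $P\leq\gfitt{G}$, so $G/\gfitt{G}$ is a $p'$-group and the hypothesis of Proposition~\ref{F*=Op'} is met with $\overline{G}=G$, that is with $s=0$.

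Applying Proposition~\ref{F*=Op'} now splits the argument. Its conclusion~(1) reads $G\in\ncp$, which is conclusion~(1) of the theorem. Its conclusion~(2) presents $G=H\ltimes P$ with $P$ a faithful homogeneous $\mathbb{F}_p[H]$-module, $H$ a Hall $p'$-complement, and irreducible constituents of dimension $e>1$ that satisfy $e\mid(d,n)$ (as $s=0$) and are not absolutely irreducible. It then remains only to upgrade ``not absolutely irreducible'' to ``$H$ cyclic'': since $e>1$ the group $G$ is not supersolvable, so applying Theorem~\ref{hv} to $G=H\ltimes P$ places us in its case~(2), whence $H$ is cyclic. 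This is conclusion~(2).

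I expect the elementary-abelian reduction to be the main obstacle: everything downstream is a clean assembly of Proposition~\ref{F*=Op'} and Theorem~\ref{hv}, whereas obtaining an elementary abelian Sylow subgroup is where $p^{2d}\leq|G|_p$ is genuinely needed and where the non-abelian and almost simple configurations must be ruled out. For a normal Sylow subgroup this can be seen directly by a purity argument, which also exhibits the role of the bound: if $P\unlhd G$ were abelian but not elementary abelian, then $\Phi(P)=\Phi(G)\neq1$ by \cite[Lemma~2.4]{qian2020}, while $s=\log_p|\Phi(G)|<d$ because subgroups of $\Phi(G)$ are non-complemented; then $P$ has $n-s\geq 2d-s>d$ cyclic direct factors, so its socle $\Omega_1(P)$ has $\mathbb{F}_p$-dimension exceeding $d$ and meets $P^{p}$ nontrivially, and an elementary abelian $X\leq\Omega_1(P)$ of order $p^d$ meeting $P^{p}$ satisfies $X^{p}=1\neq X\cap P^{p}$, hence is not a direct summand of $P$ and, $P$ being abelian and normal, has no complement in $G$ --- contradicting $G\in\ncpm{d}$. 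The genuinely hard input, which I would take from \cite{qian2015}, is the control of the non-abelian and almost simple Sylow configurations; the deepest single ingredient overall remains Theorem~\ref{hv}.
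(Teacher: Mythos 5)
Your proposal is correct and takes essentially the same route as the paper's own (very brief) proof: the bound $p^{2d}\leq|G|_p$ plus \cite[Proposition F]{qian2015} forces $P$ elementary abelian, Lemma~\ref{ea} then gives $\Phi(G)=1$ and $P\leq\gfitt{G}$, and Proposition~\ref{F*=Op'} together with Theorem~\ref{hv} (the latter supplying the upgrade from ``not absolutely irreducible'' to ``$H$ cyclic'') yields (1) or (2), while the converse follows from part (5) of Lemma~\ref{element31} and Theorem~\ref{hv} exactly as you say. Your supplementary purity argument for a normal abelian but non-elementary-abelian Sylow subgroup is sound but redundant, since the cited Proposition F already covers all configurations.
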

\begin{proof}
	($\Rightarrow$) 
	Let $P\in\mathrm{Syl}_p(G)$ where $|P|=p^n$. Since $2d\leq n$, by \cite[Proposition F]{qian2015} $P$ is elementary abelian, and therefore by Lemma \ref{ea} $\Phi(G)=1$ and $P\leq\gfitt{G}$.
	Application of Proposition \ref{F*=Op'} and Theorem \ref{hv} to $G$ yields either (1) or (2).
	
	($\Leftarrow$) If $G\in\ncp$, 
	then by part (5) of Lemma \ref{element31} $G\in\ncpm{d}$ as $p^d\mid |G|$.
	If (2) holds, then $G\in\ncpm{d}$ by Theorem \ref{hv}. 
\end{proof}

\section{Applications}

Let $G$ be a finite group, and let $A \leq G$ and $K \leq H \leq G$ with $K, H$
normal in $G$. 
Following \cite[VI, Definition 11.5]{huppert}, we say that $A$ \emph{covers} $H/K$ if
$AH = AK$, and $A$ \emph{avoids} $H/K$ if $A \cap H = A \cap K$. 
Following \cite{qian2020}, we say that
a subgroup $A$ of $G$ has the \emph{partial cover-avoidance property} in $G$ and call $A$ a \emph{partial
CAP-subgroup} of $G$, if there exists a chief series of $G$ such that $A$ either covers or avoids each chief factor of the chief series.

\bigskip

\noindent\textbf{HY($p^d$)} \emph{Let $P \in \mathrm{Syl}_p(G)$ and $p^d$ be a given prime power such that $1 < p^d < |P|$. Assume
that all subgroups of $G$ of order $p^d$ are partial CAP-subgroups, and assume further that
all cyclic subgroups of $G$ of order $4$ are partial CAP-subgroups when $p^d = 2$ and $P$ is
nonabelian.}

\bigskip

\begin{cor}
	Assume that $G$ satisfies $\mathrm{HY}(p^d)$ with $\oh{p'}{G} = 1$ and $p$-rank larger than $1$. 
	Then $G = H\ltimes P$ where $H \in \mathrm{Hall}_{p'}(G)$ and $P \in \mathrm{Syl}_p(G)$; furthermore, let $V$ be an irreducible $\mathbb{F}_p[H]$-submodule of $P/\Phi(P)$ and write
	\[
		\dim_{\mathbb{F}_p} V = e, d' = d-\log_p |\Phi(P)|, n' = \log_p |P/\Phi(P)|,
	\]
	then the following statements hold.

	{\rm (1)} $P/\Phi(P)$ is a homogeneous $\mathbb{F}_p[H]$-module, while $V$ is not absolutely irreducible.

	{\rm (2)} $d' \geq e \geq 2$, $e \mid (d', n')$, also $G/\Phi(P)$ satisfies $\mathrm{HY}(p^e )$.

	{\rm (3)} $H$ is cyclic.
\end{cor}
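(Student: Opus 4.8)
The plan is to deduce the corollary from \thmref{hv} (Theorem~B) by transporting the hypothesis $\mathrm{HY}(p^d)$, phrased for partial CAP-subgroups, into a statement about complemented $p$-subgroups on a Frattini quotient. First I would record the global structure. Since $\oh{p'}{G}=1$ and the $p$-rank of $G$ exceeds $1$, the structure theory for groups satisfying $\mathrm{HY}(p^d)$ in \cite[Theorem A$'$]{qian2020} yields a normal Sylow $p$-subgroup, so $G=H\ltimes P$ with $H\in\mathrm{Hall}_{p'}(G)$ and $P\in\syl{p}{G}$ noncyclic. By \cite[Lemma 2.4]{qian2020} this normality gives $\Phi(P)=P\cap\Phi(G)=\Phi(G)$; writing $p^s=|\Phi(P)|$, the quotient $\overline{G}=G/\Phi(P)=\overline{H}\ltimes\overline{P}$ then has $\overline{P}$ elementary abelian of order $p^{n'}$ and $\overline{H}\cong H$ acting faithfully (a $p'$-group acts faithfully on $P$ exactly when it does so on $P/\Phi(P)$, by coprimality). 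From $p^d<|P|$ I get $d'=d-s<n'$, and I would check $d'\ge 1$ exactly as in \propref{F*=Op'}, where no nontrivial subgroup lying inside $\Phi(G)$ can be complemented, forcing $s<d$.

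The decisive step is the bridge between the two hypotheses. For the split extension $\overline{G}=\overline{H}\ltimes\overline{P}$ with $\overline{P}$ an elementary abelian normal Sylow subgroup, a subgroup $X\le\overline{P}$ of given order is a partial CAP-subgroup of $\overline{G}$ if and only if it is complemented in $\overline{G}$; this equivalence, together with the behaviour of the partial CAP-property under the quotient by $\Phi(P)$, is supplied by \cite[Lemma 2.9, Corollary 2.10]{qian2020}. Invoking it, $\mathrm{HY}(p^d)$ for $G$ translates into $\overline{G}\in\ncpm{d'}$. I expect this translation to be the main obstacle, since it must reconcile the cover/avoidance data for every subgroup of order $p^d$ in $G$ with complementation in $\overline{G}$, and in particular dispose of the auxiliary order-$4$ clause of $\mathrm{HY}(p^d)$ (which becomes vacuous once $\overline{P}$ is elementary abelian); everything downstream is purely module-theoretic and already encapsulated in \thmref{hv}.

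With $\overline{G}\in\ncpm{d'}$ and $1\le d'<n'$ in hand, I would apply \thmref{hv} to $\overline{G}$. Under the present hypotheses \cite[Theorem A$'$]{qian2020} excludes the supersolvable alternative, so \thmref{hv} (equivalently \propref{F*=Op'}) places $\overline{G}$ in its second branch: $\overline{H}$ is cyclic and $\overline{P}$ is a homogeneous $\mathbb{F}_p[\overline{H}]$-module whose irreducible submodule $V$ has dimension $e>1$ with $e\mid(d',n')$ and $V$ not absolutely irreducible. From this I read off the assertions directly: statement (1) is the homogeneity together with the failure of absolute irreducibility of $V$; the divisibility $e\mid(d',n')$ and the inequalities $d'\ge e\ge 2$ of statement (2) follow from $e>1$ and $e\mid d'$; and statement (3), that $H$ is cyclic, follows from $H\cong\overline{H}$.

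It remains to verify the last clause of (2), that $\overline{G}$ satisfies $\mathrm{HY}(p^e)$. Here I would run the bridge in reverse: since $\overline{G}$ lies in case (2) of \thmref{hv} with parameter $e$, the sufficiency direction of that theorem gives $\overline{G}\in\ncpm{e}$, and the equivalence of \cite[Lemma 2.9, Corollary 2.10]{qian2020} upgrades this to $\mathrm{HY}(p^e)$ for $\overline{G}$ (again the order-$4$ clause is vacuous on the elementary abelian $\overline{P}$). The hypothesis that the $p$-rank exceeds $1$ enters only to guarantee $P$ noncyclic, hence $n'\ge 2$ and $d'<n'$, so that $\overline{G}$ is a genuine nontrivial $\cpm{d'}$-group to which \thmref{hv} applies.
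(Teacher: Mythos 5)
Your overall route is the paper's route: obtain the split $G=H\ltimes P$ from \cite{qian2020}, pass to $\overline{G}=G/\Phi(P)$, use \cite[Corollary 2.10]{qian2020} to translate $\mathrm{HY}(p^{d'})$ into $\overline{G}\in\ncpm{d'}$ (legitimate there because $\overline{P}$ is an elementary abelian normal Sylow subgroup, which also makes the order-$4$ clause vacuous), apply \thmref{hv}, and run the same equivalence backwards to get that $G/\Phi(P)$ satisfies $\mathrm{HY}(p^{e})$. That is exactly the skeleton of the paper's four-line proof, and your reading of conclusions (1)--(3) off case (2) of \thmref{hv} is fine.

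However, there is one genuine gap: your justification that $s=\log_p|\Phi(P)|<d$, i.e.\ $d'\geq 1$. You propose to argue ``exactly as in \propref{F*=Op'}'' that no nontrivial subgroup lying inside $\Phi(G)$ can be complemented. That argument is only available when the hypothesis on subgroups of order $p^d$ is \emph{complementation}; here the hypothesis $\mathrm{HY}(p^d)$ only asserts that they are \emph{partial CAP-subgroups}, and a partial CAP-subgroup can perfectly well lie inside $\Phi(G)$ --- for instance any $G$-normal subgroup contained in $\Phi(P)$ covers or avoids every factor of every chief series of $G$. So nothing in your argument rules out $s\geq d$. Nor can you escape by first passing to $G/\Phi(P)$ and using complementation there: the bridge \cite[Corollary 2.10]{qian2020} can only be invoked once you know that $G/\Phi(P)$ satisfies $\mathrm{HY}(p^{d'})$ with $p^{d'}>1$, which is precisely what is at stake, so the reasoning would be circular. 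The paper handles this point by quoting \cite[Lemma 2.8]{qian2020}, which yields $d'\geq 2$ directly from the partial CAP hypothesis, and only then transfers $\mathrm{HY}(p^{d'})$ to the Frattini quotient; your step must be replaced by that lemma or by a genuine CAP-theoretic proof of the bound. Everything else in your proposal --- the faithfulness of the $H$-action on $P/\Phi(P)$ by coprimality, the exclusion of the supersolvable branch (which, as in the paper, leans on the conclusions already established in \cite{qian2020}), and the reverse bridge for $\mathrm{HY}(p^e)$ --- matches the intended argument.
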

\begin{proof}
	By \cite[Theorem A]{qian2020}, $G = H\ltimes P$ where $H\in\mathrm{Hall}_{p'}(G)$ and $P\in\syl{p}{G}$. 
	Observe that $d'\geq 2$ by \cite[Lemma 2.8]{qian2020}, so $G/\Phi(P)$ satisfies HY$(p^{d'})$.
    Also since $P/\Phi(P)\in\syl{p}{G/\Phi(P)}$ is elementary abelian, 
	\cite[Corollary 2.10]{qian2020} implies that $G/\Phi(P)$ satisfies HY$(p^{d'})$ if and only if $G/\Phi(P)\in\ncpm{d'}$. 
    So the results follow directly by Theorem B.
\end{proof}

The above corollary improve \cite[Theorem A$'$]{qian2020} in which $H$ is proved to be supersolvable whose Sylow subgroups are all abelian and the Fitting
subgroup of $H$ is cyclic.


\end{document}